\newtheorem{theorem}{Theorem}[section]
\newtheorem{cor}[theorem]{Corollary}
\newtheorem{lemma}[theorem]{Lemma}
\numberwithin{equation}{subsection}
\newtheorem{definition}[theorem]{Definition}
\title{The order of magnitude for moments for certain cotangent sums}
\author{Helmut Maier and Michael Th. Rassias}
\date{\today}
\address{Department of Mathematics, University of Ulm, Helmholtzstrasse 18, 89081 Ulm, Germany.}
\email{helmut.maier@uni-ulm.de}
\address{Department of Mathematics, ETH-Z\"{u}rich, R\"{a}mistrasse 101, 8092 Z\"{u}rich, Switzerland \& Department of Mathematics, Princeton University, Fine Hall, Washington Road, Princeton, NJ 08544-1000, USA}
\email{michail.rassias@math.ethz.ch, michailrassias@math.princeton.edu}
\thanks{}
\begin{document}

 \maketitle
 
\begin{abstract} 
We settle a question on the rate of growth of the moments of cotangent sums considered by the authors in their previous papers \cite{mr}, \cite{growth}. We even obtain the true order of magnitude of these
moments. We include as well the moments of order $2k$ between fixed multiples of $(2k)!/\pi^{2k}$. \\ \\
\textbf{Key words:} Cotangent sums; equidistribution; moments; continued fractions; measure.\\
\textbf{2000 Mathematics Subject Classification:}  26A12; 11L03; 11M06.%
\newline

\end{abstract}
\section{Introduction}
The authors in joint work and the second author in his thesis, investigated the distribution of cotangent sums
$$c_0\left(\frac{r}{b}\right)=-\sum_{m=1}^{b-1}\frac{m}{b}\cot\left(\frac{\pi m r}{b} \right),$$
as $r$ ranges over the set
$$\{r\::\: (r,b)=1,A_0b\leq r\leq A_1b  \},$$
where $A_0,A_1$ are fixed with $1/2<A_0<A_1<1$ and $b$ tends to infinity.\\
Especially, they considered the moments
$$H_k=\lim_{b\rightarrow+\infty}\phi(b)^{-1}b^{-2k}(A_1-A_0)^{-1}\sum_{\substack{A_0b\leq r\leq A_1b \\ (r,b)=1}}c_0\left(\frac{r}{b} \right)^{2k},\  k\in\mathbb{N},$$
where $\phi(\cdot)$ denotes the Euler phi-function.\\
They could show that all the moments $H_k$ exist and that
$$\lim_{k\rightarrow+\infty}H_k^{1/k}=+\infty.$$
Thus the series $\sum_{k\geq 0}H_kx^{2k}$converges only for $x=0$.\\
It was left open, whether the series
\[
\sum_{k\geq 0}\frac{H_k}{(2k)!}x^k\tag{*}
\]
converges for values of $x$ different from $0$. This fact would considerably simplify the proof for
the distribution of the cotangent sums $c_0(r/b)$ (uniqueness of measures determined by their moments, see \cite{billi}, Section 30, The Method of Moments, Theorem 30.1).\\
Essential for the investigation was the result:
$$H_k=\int_0^1\left(\frac{g(x)}{\pi} \right)^{2k}dx,$$
where
$$g(x)=\sum_{l\geq 1}\frac{1-2\{lx\}}{l}.$$
The function $g$ has been also investigated in the paper \cite{bre} of R. de la Bret\`eche and G. Tenenbaum. More recently it has also been investigated by M. Balazard and B. Martin \cite{balaz2}. We are indebted to M. Balazard for this information. Their ideas, as well as ideas from the paper of S. Marmi, P. Moussa and J. -C. Yoccoz \cite{Marmi} will play an important role in our paper. Independently also S. Bettin \cite{bettin} investigated this problem
and obtained the positivity of the radius of convergence of the series (*). He could also replace the interval $(1/2,1)$ for $A_0,A_1$ by the interval $(0,1)$. We are thankful to S. Bettin
for reading an earlier version of this paper and for providing useful remarks. We shall show the following theorem.
\begin{theorem}\label{x:maint}
There are constants $c_1,c_2>0$, such that
$$c_1\Gamma(2k+1)\leq\int_0^1g(x)^{2k}\:dx\leq c_2\Gamma(2k+1),$$
for all $k\in\mathbb{N}$, where $\Gamma(\cdot)$ stands for the Gamma function.
\end{theorem}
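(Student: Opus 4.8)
The plan is to reduce the estimate of $\int_0^1 g(x)^{2k}\,dx$ to an understanding of the distribution function $\mu(\lambda)=|\{x\in[0,1]:|g(x)|>\lambda\}|$, via the layer–cake identity
$$\int_0^1 |g(x)|^{2k}\,dx = 2k\int_0^\infty \lambda^{2k-1}\mu(\lambda)\,d\lambda,$$
and to prove that $\mu$ has exponential tails of the sharp shape $c\,e^{-\lambda}\le \mu(\lambda)\le C\,e^{-\lambda}$. Inserting such two–sided bounds and using $2k\int_0^\infty \lambda^{2k-1}e^{-\lambda}\,d\lambda=\Gamma(2k+1)$, together with the trivial bound $\mu\le 1$ to dispose of the range of small $\lambda$, yields both inequalities of the theorem. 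The crucial point is that the constant $1$ in the exponent of $e^{-\lambda}$ is exactly what produces $\Gamma(2k+1)$ with no spurious $k$–dependent loss; a cruder Chebyshev estimate with $e^{tg}$, $t<1$, would lose a factor of order $k$.

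The decisive input is the local behaviour of $g$ at rationals, and I would begin at $x=0$. Splitting the defining sum at $l\approx 1/x$, the terms with $l<1/x$ give $\sum_{l<1/x}(1/l-2x)=\log(1/x)+O(1)$, while the tail is $O(1)$ by partial summation; hence $g(x)=\log(1/x)+O(1)$ as $x\to 0^+$, with coefficient of the logarithm equal to $1$, and by periodicity and oddness the same holds up to sign near every integer. This already gives the lower bound: for small $\delta$ one has $g(x)\ge \log(1/x)-C>0$ on $(0,\delta)$, so with $u=\log(1/x)-C$,
$$\int_0^1 g(x)^{2k}\,dx \ge \int_0^\delta \big(\log(1/x)-C\big)^{2k}\,dx = e^{-C}\!\!\int_{\log(1/\delta)-C}^\infty u^{2k}e^{-u}\,du \ge c_1\,\Gamma(2k+1)$$
for all large $k$, since the bounded lower limit is negligible against the peak of $u^{2k}e^{-u}$ at $u=2k$.

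For the upper bound I would analyse $g$ near a general rational $a/q$ with $(a,q)=1$. Writing $x=a/q+t$ and grouping the index $l$ by its residue $r$ modulo $q$, the class $r\equiv 0$ contributes exactly $\tfrac1q g(qt)$, while the classes $r\not\equiv 0$ are regular at $t=0$ (their combined sum converges because $\sum_{r=1}^{q-1}(1-2\{ra/q\})=0$). This gives the self–similar expansion
$$g\Big(\frac aq+t\Big)=\frac1q\,g(qt)+R_{a/q}(t)=\frac1q\log\frac1{|t|}+c_{a/q}+o(1)\qquad(t\to 0),$$
so $g$ has a logarithmic singularity at $a/q$ with coefficient $1/q$, strictly smaller than the coefficient $1$ at integers once $q\ge 2$. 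Hence the part of $\{|g|>\lambda\}$ near $a/q$ has measure $\ll e^{-q\lambda}e^{qc_{a/q}}$, and summing over all rationals gives $\mu(\lambda)\ll \sum_{q\ge 1}e^{-q\lambda}\sum_{(a,q)=1}e^{qc_{a/q}}$, whose $q=1$ term reproduces the sharp bound $Ce^{-\lambda}$.

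The hard part will be precisely the contribution of the denominators $q\ge 2$: I need \emph{uniform} control of the constants $c_{a/q}$, of the shape $\sum_{(a,q)=1}e^{qc_{a/q}}\ll q^{A}$, so that $\sum_{q\ge 2}e^{-(q-1)\lambda}\sum_{(a,q)=1}e^{qc_{a/q}}=o(1)$ for $\lambda$ large and the whole sum is dominated by the integer singularities. In other words, one must rule out that the accumulation of singularities at rationals of large denominator overwhelms the coefficient–$1$ singularities at integers. This is where the continued–fraction/Gauss–map machinery enters: I would iterate the self–similar relation above along the continued fraction expansion of $x$—the mechanism behind the Brjuno–type functions of Marmi–Moussa–Yoccoz and the distributional estimates of de la Bret\`eche–Tenenbaum and Balazard–Martin—to obtain the required uniform bound on $c_{a/q}$, and thereby pin the exponential rate of $\mu$ at the sharp value $1$.
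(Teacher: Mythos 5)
Your guiding intuition is the right one --- the moments are governed by the logarithmic singularities of $g$, the coefficient-$1$ singularities at the integers force the tail $e^{-\lambda}$ and hence $\Gamma(2k+1)$, and a rational $a/q$ should only enter at the scale $e^{-q\lambda}$; this matches the paper's conclusion $\int_0^1\mathcal{W}(x)^{2k}dx=2\Gamma(2k+1)(1+O(e^{-CL}))$ (its formula (44)). But the proposal has two genuine gaps. The first is the pointwise expansion $g(x)=\log(1/x)+O(1)$ as $x\to0^+$, on which your lower bound rests: it is false, because the tail $\sum_{l>1/x}(1-2\{lx\})/l$ is \emph{not} uniformly $O(1)$. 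The correct statement (Lemmas \ref{x:lem3} and \ref{x:lem4} of the paper) is $g(x)=\log(1/x)-x\,\mathcal{W}(\alpha(x))+O(1)$ with $\alpha(x)=\{1/x\}$, and the term $x\,\mathcal{W}(\alpha(x))$ is unbounded on every interval $(0,\delta)$; concretely, by the antisymmetry $g(1-x)=-g(x)$ and your own self-similar relation, $g$ tends to $-\infty$ on the left side of every rational $1/q\in(0,\delta)$. So the pointwise inequality $g(x)\ge\log(1/x)-C$ fails on a set of positive measure in every $(0,\delta)$, and your chain of inequalities does not go through as written. It can be repaired, but only by proving that the exceptional set where $x|\mathcal{W}(\alpha(x))|$ is large has exponentially small measure inside each scale $(e^{-L+j-1},e^{-L+j})$ --- which is exactly the content of the paper's Lemmas \ref{x:siml7}--\ref{x:siml99}, i.e.\ the machinery the layer-cake reduction was meant to bypass.

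The second gap is the decisive one: the upper bound, which is the actual substance of the theorem, is not proved but deferred ("I would iterate the self-similar relation \dots to obtain the required uniform bound on $c_{a/q}$"). Moreover the obstruction is not only the uniformity of the constants $c_{a/q}$. Your expansion $g(a/q+t)=\frac1q\log(1/|t|)+c_{a/q}+o(1)$ holds only in the limit $t\to0$, with no uniform neighborhood, because the remainder $R_{a/q}(t)$ itself blows up at the rationals accumulating at $a/q$. Since the rationals are dense, "the part of $\{|g|>\lambda\}$ near $a/q$" has no meaning until one fixes a decomposition of $(0,1)$ into cells attached to rationals and controls $g$ on each entire cell, including the interaction between the singularity at $a/q$ and those at neighbouring rationals; summing $e^{-q\lambda}e^{qc_{a/q}}$ over all $a/q$ without such a decomposition double counts and ignores exactly the regime where the expansion is invalid. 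Organizing this is precisely what the paper's continued-fraction/transfer-operator argument accomplishes: it replaces $g$ by $\mathcal{W}$ (bounded difference), approximates $\mathcal{W}$ by the finite alternating sums $\mathcal{L}(x,n)=\sum_{\nu\le n}(-1)^\nu T^\nu l(x)$, and then shows via the double-largeness estimates for the sets $\mathcal{J}(d,h,u,v)$, $\mathcal{T}(L,j,\nu)$, $U(L,\nu)$, $\mathcal{V}(L,\nu)$ (Definitions \ref{x:simd6}, \ref{x:simdef77}, \ref{x:def217}, \ref{x:def2220}) that two iterates $T^{h_1}l$, $T^{h_2}l$ cannot both be large outside exponentially small sets, so the integral localizes at $x=0$ and $x=1$ and equals $2\Gamma(L+1)(1+O(e^{-CL}))$ (Lemma \ref{x:2250}). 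In short, your outline reproduces the correct heuristic and the correct answer, but both the lower-bound step and, crucially, the entire upper bound still require the measure-theoretic continued-fraction analysis that constitutes the paper's proof.
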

\begin{cor}\label{x:122}
The series 
$$\sum_{k\geq 0}\frac{H_k}{(2k)!}x^k$$
has radius of convergence $\pi^2$.
\end{cor}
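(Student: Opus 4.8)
The plan is to deduce the corollary directly from Theorem \ref{x:maint} together with the integral representation $H_k=\int_0^1\left(g(x)/\pi\right)^{2k}dx$ recorded above, via the Cauchy--Hadamard formula. Writing $a_k=H_k/(2k)!$ for the coefficients of the power series $(*)$, the radius of convergence is $R=\left(\limsup_{k\to\infty}a_k^{1/k}\right)^{-1}$, so the whole task reduces to evaluating $\lim_{k\to\infty}a_k^{1/k}$.

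First I would rewrite $H_k=\pi^{-2k}\int_0^1 g(x)^{2k}\,dx$ and insert the two-sided bound of Theorem \ref{x:maint}, using $\Gamma(2k+1)=(2k)!$. This gives
$$c_1\,\pi^{-2k}(2k)!\leq H_k\leq c_2\,\pi^{-2k}(2k)!$$
for all $k\in\mathbb{N}$. Dividing through by $(2k)!$ yields the clean sandwich
$$c_1\,\pi^{-2k}\leq a_k\leq c_2\,\pi^{-2k},$$
which already displays the mechanism: the factorial growth of the integral $\int_0^1 g(x)^{2k}\,dx$ is exactly cancelled by the $(2k)!$ in the denominator of $(*)$, leaving pure geometric decay of ratio $\pi^{-2}$.

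Taking $k$-th roots and letting $k\to\infty$, the constants wash out since $c_1^{1/k}\to 1$ and $c_2^{1/k}\to 1$, whence $a_k^{1/k}\to\pi^{-2}$. Therefore $\limsup_{k\to\infty}a_k^{1/k}=\pi^{-2}$ and $R=\pi^2$, as claimed.

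There is essentially no obstacle here; the entire content sits inside Theorem \ref{x:maint}. It is worth stressing, though, that \emph{both} inequalities are genuinely used: the upper bound alone forces $R\geq\pi^2$ (the coefficients do not decay too slowly), while the lower bound alone forces $R\leq\pi^2$ (they do not decay too fast), and only the matching order of magnitude pins the radius down to exactly $\pi^2$ rather than merely bounding it on one side. This is precisely why the sharp two-sided estimate of Theorem \ref{x:maint}, and not just the finiteness of the moments $H_k$, is required to settle the convergence question raised around $(*)$ in the introduction.
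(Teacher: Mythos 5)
Your proof is correct and is essentially the paper's own argument: the paper's proof of Corollary \ref{x:122} is just the one-line remark that $\Gamma(2k+1)=(2k)!$, with the sandwich $c_1\pi^{-2k}\leq H_k/(2k)!\leq c_2\pi^{-2k}$ and the Cauchy--Hadamard computation left implicit, exactly as you have spelled them out.
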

%
%
\section{Proof of Theorem \ref{x:maint}}
Let $X=(0,1)\setminus\mathbb{Q}$. This notation shall be used throughout the paper.
\begin{definition}\label{x:protos}
Let $\alpha(x)=\{1/x\}$ for $x\in X.$ The iterates $\alpha_k$ of $\alpha$ are defined by $\alpha_0(x)=x$ and $$\alpha_k(x)=\alpha(\alpha_{k-1}(x)),\ \text{for}\ k>1.$$
\end{definition}
\begin{lemma}
Let $x\in X$ and let 
$$x=[a_0(x);a_1(x),\ldots,a_k(x),\ldots]$$
be the continued fraction expansion of $x$. We define the partial quotient of $p_k(x)$, $q_k(x)$
by
$$\frac{p_k(x)}{q_k(x)}=[a_0(x);a_1(x),\ldots,a_k(x)]\:,\ \text{where}\ (p_k(x),q_k(x))=1\:.$$
Then we have
$$a_k(x)=\left\lfloor  \frac{1}{\alpha_{k-1}(x)} \right\rfloor\:,$$
$$p_{k+1}=a_{k+1}p_k+p_{k-1}\:$$
and
$$q_{k+1}=a_{k+1}q_k+q_{k-1}.$$
\end{lemma}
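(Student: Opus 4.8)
The plan is to treat both assertions as the standard facts of the continued fraction algorithm, organized around the Gauss map $\alpha(x)=\{1/x\}$. First I would record that for irrational $x\in X$ every iterate $\alpha_k(x)$ again lies in $X$, so all the reciprocals below are well defined and lie in $(0,1)$. By the very definition of the fractional part, for each $k\ge 1$ one has
$$\frac{1}{\alpha_{k-1}(x)}=\left\lfloor\frac{1}{\alpha_{k-1}(x)}\right\rfloor+\alpha_k(x),$$
the integer part being at least $1$ because $\alpha_{k-1}(x)\in(0,1)$. Setting $a_k(x):=\lfloor 1/\alpha_{k-1}(x)\rfloor$ and solving for $\alpha_{k-1}(x)$ gives $\alpha_{k-1}(x)=1/(a_k(x)+\alpha_k(x))$. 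Unfolding this relation from $k=1$ downward exhibits $x=[0;a_1(x),\ldots,a_{k-1}(x),1/\alpha_{k-1}(x)]$, so that the sequence $(a_k(x))$ generated by the Gauss map iteration coincides with the partial quotients of the continued fraction expansion of $x$; this is precisely the first displayed identity, the only thing to check being the uniqueness of the expansion of an irrational number.

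For the two recurrences I would argue by the transfer–matrix formalism. Define integers $P_k,Q_k$ by $P_{-1}=1$, $P_0=a_0$, $Q_{-1}=0$, $Q_0=1$ and the recurrences $P_{k+1}=a_{k+1}P_k+P_{k-1}$, $Q_{k+1}=a_{k+1}Q_k+Q_{k-1}$. A direct induction on $k$ shows
$$\begin{pmatrix}a_0 & 1\\ 1 & 0\end{pmatrix}\begin{pmatrix}a_1 & 1\\ 1 & 0\end{pmatrix}\cdots\begin{pmatrix}a_k & 1\\ 1 & 0\end{pmatrix}=\begin{pmatrix}P_k & P_{k-1}\\ Q_k & Q_{k-1}\end{pmatrix},$$
and a second induction (equivalently, the Möbius–transformation interpretation of these matrices) shows $[a_0;a_1,\ldots,a_k]=P_k/Q_k$. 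Taking determinants of the matrix identity, and using that each factor has determinant $-1$, yields $P_kQ_{k-1}-P_{k-1}Q_k=(-1)^{k+1}$.

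It then remains to identify $P_k,Q_k$ with the reduced numerator and denominator $p_k,q_k$. The determinant relation forces $\gcd(P_k,Q_k)=1$, and since $Q_k>0$ for all $k\ge 0$, the fraction $P_k/Q_k$ is already in lowest terms; combined with the defining property $(p_k,q_k)=1$ and $p_k/q_k=[a_0;\ldots,a_k]=P_k/Q_k$, this gives $p_k=P_k$ and $q_k=Q_k$. Substituting back into the recurrences for $P_k,Q_k$ produces the asserted recurrences for $p_{k+1}$ and $q_{k+1}$. I expect the only genuinely delicate point to be this last identification: one must invoke the coprimality furnished by the determinant identity in order to pass from the raw solutions of the recurrence to the canonical reduced convergents, after which the statement is immediate.
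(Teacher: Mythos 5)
Your proposal is correct and complete. Note, however, that the paper itself offers no argument at all for this lemma: its entire ``proof'' is the citation \emph{(cf.\ Hensley, Continued Fractions, p.~7)}, treating these as standard facts of the continued fraction algorithm. So there is no argument in the paper to compare against; what you have done is supply the standard textbook proof that the citation points to. Your two ingredients --- unfolding the identity $1/\alpha_{k-1}(x)=a_k(x)+\alpha_k(x)$ of the Gauss map to identify the $a_k(x)$ with the partial quotients of the (unique) expansion of an irrational $x$, and the transfer-matrix induction giving the recurrences together with the determinant identity $P_kQ_{k-1}-P_{k-1}Q_k=(-1)^{k+1}$ --- are exactly the classical route, and you correctly flag the one genuinely delicate point: the determinant identity yields $\gcd(P_k,Q_k)=1$ and $Q_k>0$, which is what legitimizes identifying the recursively defined $P_k,Q_k$ with the reduced convergents $p_k,q_k$ fixed by the normalization $(p_k,q_k)=1$ in the statement. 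What your version buys over the paper's is self-containedness; what it costs is length for facts the authors reasonably regarded as standard background.
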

\begin{proof} (cf. \cite{hens}, p. 7).\end{proof}
\begin{definition}
Let $x\in X$. Let also 
$$\beta_k(x)=\alpha_0(x)\alpha_1(x)\cdots \alpha_k(x)$$
(by convention $\beta_{-1}=1$) and
$$\gamma_k(x)=\beta_{k-1}(x)\log\frac{1}{\alpha_k(x)},\ \text{where}\ k\geq 0,$$
so that $\gamma_0(x)=\log(1/x).$\\
The number $x$ is called a \textit{Wilton number}, if the series
$$\sum_{k\geq 0}(-1)^k\gamma_k(x)$$
converges.\\
\textit{Wilton's function} $\mathcal{W}(x)$ is defined by
$$\mathcal{W}(x)=\sum_{k\geq 0}(-1)^k\gamma_k(x)$$
for each Wilton number $x\in(0,1)$. 
\end{definition}
\begin{lemma}\label{x:lem3}
A number $x\in X$ is a Wilton number if and only if $\alpha(x)$ is a Wilton number. In this case, we have:
$$\mathcal{W}(x)=\log\frac{1}{x}-x\mathcal{W}(\alpha(x)).$$
\end{lemma}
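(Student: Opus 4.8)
The plan is to prove Lemma \ref{x:lem3} by unwinding the definitions of the quantities $\beta_k$ and $\gamma_k$ and relating the series for $x$ to the series for $\alpha(x)$ term by term. The key observation is that the iterates of $\alpha$ are shifted under passage from $x$ to $\alpha(x)$: since $\alpha_k(x) = \alpha(\alpha_{k-1}(x))$, we have $\alpha_k(\alpha(x)) = \alpha_{k+1}(x)$ for all $k \geq 0$. This index shift is the engine of the whole argument, so I would record it first.

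Next I would express $\beta_k(\alpha(x))$ and $\gamma_k(\alpha(x))$ in terms of the corresponding quantities for $x$. From the shift relation, $\beta_k(\alpha(x)) = \alpha_0(\alpha(x))\cdots\alpha_k(\alpha(x)) = \alpha_1(x)\cdots\alpha_{k+1}(x)$. Since $\beta_{k+1}(x) = \alpha_0(x)\beta_k(\alpha(x)) = x\,\beta_k(\alpha(x))$, this lets me rewrite products cleanly. Then
\[
\gamma_{k}(\alpha(x)) = \beta_{k-1}(\alpha(x))\log\frac{1}{\alpha_k(\alpha(x))} = \frac{1}{x}\,\beta_k(x)\log\frac{1}{\alpha_{k+1}(x)} = \frac{1}{x}\,\gamma_{k+1}(x),
\]
using $\beta_{k-1}(\alpha(x)) = \beta_k(x)/x$ and $\alpha_k(\alpha(x)) = \alpha_{k+1}(x)$. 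In other words $\gamma_{k+1}(x) = x\,\gamma_k(\alpha(x))$ for $k \geq 0$, which exhibits each tail term of the $x$-series as an $x$-multiple of a term of the $\alpha(x)$-series.

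With this identity in hand, the equivalence of Wilton-number status is immediate: since $x \neq 0$, the series $\sum_{k\geq 0}(-1)^k\gamma_{k+1}(x)$ converges if and only if $\sum_{k\geq 0}(-1)^k\gamma_k(\alpha(x))$ converges, and the former differs from $\sum_{k\geq 0}(-1)^k\gamma_k(x)$ only by the single term $\gamma_0(x) = \log(1/x)$. Hence $x$ is a Wilton number iff $\alpha(x)$ is. For the functional equation I would split off the $k=0$ term and reindex the remaining sum:
\[
\mathcal{W}(x) = \sum_{k\geq 0}(-1)^k\gamma_k(x) = \gamma_0(x) + \sum_{k\geq 1}(-1)^k\gamma_k(x) = \log\frac{1}{x} - \sum_{j\geq 0}(-1)^j\gamma_{j+1}(x),
\]
and then substitute $\gamma_{j+1}(x) = x\,\gamma_j(\alpha(x))$ to obtain $-x\sum_{j\geq 0}(-1)^j\gamma_j(\alpha(x)) = -x\,\mathcal{W}(\alpha(x))$, giving the claimed relation. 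I do not anticipate a genuine obstacle here; the only point requiring care is bookkeeping with the index shift and the convention $\beta_{-1}=1$, ensuring that the relation $\beta_{k-1}(\alpha(x)) = \beta_k(x)/x$ holds down to $k=0$ (where it reads $\beta_{-1}(\alpha(x)) = 1 = x/x = \beta_0(x)/x$, consistent with $\beta_0(x)=x$). The main thing to get right is simply that the shift aligns the tails correctly so that the common factor $x$ can be pulled out of the entire tail sum.
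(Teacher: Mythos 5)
Your proof is correct, but it takes a different route from the paper: the paper does not prove this lemma at all, it simply cites Proposition 8 of Balazard--Martin \cite{balaz2}, whereas you give a complete, self-contained verification from the definitions. Your argument is sound in every step: the shift identity $\alpha_k(\alpha(x))=\alpha_{k+1}(x)$ follows by induction from $\alpha_0(\alpha(x))=\alpha_1(x)$; the relation $\beta_{k-1}(\alpha(x))=\beta_k(x)/x$ holds for all $k\geq 0$ (including $k=0$ via the convention $\beta_{-1}=1$, as you check); hence $\gamma_{k+1}(x)=x\,\gamma_k(\alpha(x))$, and since $x\neq 0$ the tail $\sum_{j\geq 0}(-1)^j\gamma_{j+1}(x)$ converges if and only if $\sum_{j\geq 0}(-1)^j\gamma_j(\alpha(x))$ does, which gives both the equivalence of Wilton-number status and the functional equation $\mathcal{W}(x)=\log(1/x)-x\,\mathcal{W}(\alpha(x))$ upon splitting off the $k=0$ term. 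The only point worth making explicit is that $\alpha(x)\in X$ whenever $x\in X$ (irrationality is preserved by $x\mapsto\{1/x\}$), so the iterates and the series for $\alpha(x)$ are well defined; this is immediate but is the hypothesis under which your index bookkeeping operates. What your approach buys is self-containedness: the reader need not consult the external reference, and the computation is short and transparent. What the paper's approach buys is brevity and consistency with its general policy of importing the analytic infrastructure for $\mathcal{W}$ (this lemma, and the decomposition $g=\mathcal{W}+H$ of Lemma \ref{x:lem4}) from \cite{balaz2}, where your term-by-term argument is essentially the proof given.
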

\begin{proof} This is Proposition 8 of \cite{balaz2}.  \end{proof}
\begin{lemma}\label{x:lem4}
There is a bounded function $H\::\:(0,1)\rightarrow\mathbb{R}$, which is continuous in every irrational number, such that
$$g(x)=\mathcal{W}(x)+H(x),$$
almost everywhere.
\end{lemma}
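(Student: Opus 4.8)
The plan is to exhibit $H:=g-\mathcal{W}$ directly and to prove that this difference is bounded and continuous at the irrationals, by showing that $g$ satisfies, up to a harmless error, the very functional equation under the Gauss map $\alpha(x)=\{1/x\}$ that Lemma~\ref{x:lem3} records for $\mathcal{W}$. Writing the sawtooth $((y))=\{y\}-\tfrac12$, so that $g(x)=-2\sum_{l\geq1}((lx))/l$ for irrational $x$, the first and decisive step is to establish a relation of the shape
$$g(x)=\log\frac{1}{x}-x\,g(\alpha(x))+E(x),$$
where $E\colon(0,1)\to\mathbb{R}$ is bounded and continuous at every irrational number. I would obtain this by splitting the Dirichlet-type sum defining $g(x)$ at $l\approx 1/x$, using the division algorithm to compare $\{lx\}$ with the fractional parts governed by $1/x$, and estimating the remaining oscillatory sums by partial summation: the main term $\log(1/x)$ comes from $\sum_{l\le 1/x}1/l$, the term $-x\,g(\alpha(x))$ arises from re-expanding the tail in terms of $\{1/x\}$, and everything else is absorbed into the bounded error $E$. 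This reciprocity estimate is where I expect the real work to lie, and it is precisely the point at which the analytic ideas of de~la~Bret\`eche--Tenenbaum \cite{bre} and Balazard--Martin \cite{balaz2} enter.

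Granting the functional equation for $g$, the rest is formal. Subtracting it from the identity $\mathcal{W}(x)=\log(1/x)-x\,\mathcal{W}(\alpha(x))$ of Lemma~\ref{x:lem3} and setting $D=g-\mathcal{W}$ cancels the singular term $\log(1/x)$ and leaves the inhomogeneous recursion
$$D(x)=-x\,D(\alpha(x))+E(x).$$
Iterating this along the orbit $x,\alpha(x),\alpha_2(x),\dots$ and using $\beta_{k-1}(x)=\alpha_0(x)\cdots\alpha_{k-1}(x)$ (with $\beta_{-1}=1$) yields, for every $n$,
$$D(x)=\sum_{k=0}^{n-1}(-1)^k\beta_{k-1}(x)\,E(\alpha_k(x))+(-1)^n\beta_{n-1}(x)\,D(\alpha_n(x)).$$
The structure here mirrors the definition of $\mathcal{W}$ itself, with the bounded quantity $E(\alpha_k(x))$ playing the role of $\log(1/\alpha_k(x))$; since $E$ is bounded, the resulting series will converge absolutely rather than merely conditionally.

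To pass to the limit I would invoke the classical identity $\beta_{k-1}(x)=|q_{k-1}(x)x-p_{k-1}(x)|<1/q_k(x)$ together with the Fibonacci lower bound $q_k(x)\ge F_{k+1}$, valid for all $x$ because $q_{k+1}=a_{k+1}q_k+q_{k-1}\ge q_k+q_{k-1}$. Thus $\beta_{k-1}(x)\le 1/F_{k+1}$ uniformly in $x$, so the remainder term tends to $0$ and
$$H(x)=\sum_{k\ge0}(-1)^k\beta_{k-1}(x)\,E(\alpha_k(x))$$
converges. The Weierstrass $M$-test with $M_k=\|E\|_\infty/F_{k+1}$ gives absolute and uniform convergence, whence $|H(x)|\le\|E\|_\infty\sum_{k\ge0}1/F_{k+1}<\infty$, so $H$ is bounded. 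For irrational $x$ every iterate $\alpha_k(x)$ is again irrational, and both $\alpha_k$ and $\beta_{k-1}$ are continuous at $x$, so each summand is continuous at the irrationals; uniform convergence then forces $H$ to be continuous at every irrational. Finally, $D=H$ holds wherever the functional equation for $g$ is valid, that is almost everywhere, which gives $g=\mathcal{W}+H$ a.e. and completes the proof. The only genuinely hard input is the reciprocity relation of the first paragraph; once its error term is known to be bounded and continuous at the irrationals, the properties claimed for $H$ follow mechanically from the geometric decay of $\beta_{k-1}$.
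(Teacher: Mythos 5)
Your strategy---derive an approximate reciprocity formula for $g$ under the Gauss map, subtract the identity of Lemma \ref{x:lem3}, and iterate---is the classical route (essentially how Wilton and later Balazard--Martin proceed), but as written it has two genuine gaps. The decisive one: the paper's entire proof consists of quoting Proposition 2 of \cite{balaz2}, which asserts precisely the decomposition $\phi_1(x)=-\frac{1}{2}\mathcal{W}(x)+G(x)$ almost everywhere with $G$ bounded and continuous at the irrationals; your proposal replaces that citation by the claim that
$$g(x)=\log\frac{1}{x}-x\,g(\alpha(x))+E(x)$$
with $E$ defined everywhere, bounded, and continuous at every irrational, and this claim is never proved, only described (``split at $l\approx 1/x$, partial summation''). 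It is not a routine estimate: the series defining $g$ converges only almost everywhere and only conditionally, so even defining $E$ off a null set, let alone proving uniform boundedness and continuity at irrationals of the error, is essentially the full analytic content of the result you are declining to quote. Indeed the two statements are nearly equivalent: given the lemma one may simply take $E(x)=H(x)+xH(\alpha(x))$. So the proposal proves nothing beyond the formal iteration layer.

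The second gap is in that iteration layer itself. To pass from
$$D(x)=\sum_{k=0}^{n-1}(-1)^k\beta_{k-1}(x)\,E(\alpha_k(x))+(-1)^n\beta_{n-1}(x)\,D(\alpha_n(x))$$
to $D=H$ almost everywhere you need $\beta_{n-1}(x)\,D(\alpha_n(x))\to 0$, and the uniform decay $\beta_{n-1}(x)\le 1/F_{n+1}$ alone does not give this: $D=g-\mathcal{W}$ is at this stage merely a measurable function, a priori unbounded (its boundedness is the conclusion of the lemma!), and nothing prevents $D(\alpha_n(x))$ from growing faster than $F_{n+1}$ along the orbit. The argument as stated is circular at this point. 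It can be repaired, but only with an additional measure-theoretic input you omit: for the $\mathcal{W}$-part, convergence of Wilton's series already gives $\beta_{n-1}(x)\mathcal{W}(\alpha_n(x))\to 0$ at every Wilton number; for the $g$-part one can use $g\in L^2$, the $\alpha$-invariance of the measure $m$ of Definition \ref{x:def4}, and Borel--Cantelli to get $|g(\alpha_n(x))|\le\theta^{n}$ eventually for almost every $x$, for any fixed $\theta$ with $1<\theta<\frac{1+\sqrt{5}}{2}$, which then beats the Fibonacci growth. (A minor further point: iterating the functional equation $n$ times requires it to hold at $\alpha_k(x)$ for all $k<n$; this is still a full-measure set because $\alpha$ is nonsingular, but it should be said.) The parts of your argument that are complete---the iteration algebra, the bound $|H|\le\|E\|_\infty\sum_{k\ge 0}1/F_{k+1}$, and continuity of $H$ at irrationals via uniform convergence---are the easy, formal parts; the paper's one-line citation outsources exactly the two points above.
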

\begin{proof}
In \cite{balaz2}, the function $\phi_1$ is defined by 
$$\phi_1(t)=\sum_{n\geq 1}\frac{B_1(nt)}{n}=\sum_{n\geq 1}\frac{\{nt\}-1/2}{n}\:.$$
Thus, we have
\[
g(x)=-2\phi_1(x).\tag{1}
\]
By Proposition 2 of \cite{balaz2} we obtain
\[
\phi_1(x)=-\frac{1}{2}\mathcal{W}(x)+G(x)\tag{2}
\]
almost everywhere.\\
The function $G$ is bounded and continuous in the set of irrational numbers. The proof of
Lemma \ref{x:lem4} follows from (1) and (2), by the choice \mbox{$H=-2G$}.
\end{proof}
\begin{lemma}\label{x:tl6}
Let $L\in\mathbb{N}$. We have
$$\int_0^{1}g(x)^Ldx= \int_0^{1}\mathcal{W}(x)^L\:dx+\mathcal{R}(L),$$
where
$$\mathcal{R}(L)=\sum_{m=1}^L\binom{L}{m}\int_0^{1}\mathcal{W}(x)^{L-m}H(x)^m\:dx.$$
\end{lemma}
\begin{proof}
This follows from Lemma \ref{x:lem4} and the binomial theorem.
\end{proof}
\begin{definition}\label{x:def4}
Let $p>1$ and $T\::\: L^p\rightarrow L^p$ be defined by 
$$Tf(x)=xf(\alpha(x)).$$
The measure $m$ is defined by
$$m(\mathcal{E})=\frac{1}{\log 2}\int_{\mathcal{E}}\frac{dx}{1+x}\:,$$
where $\mathcal{E}$ is any measurable subset of $(0,1)$.
\end{definition}
\begin{lemma}\label{x:lem6}
Let $p>1$, $n\in\mathbb{N}$.\\
(i) The measure $m$ is invariant with respect to the map $\alpha$, i.e.
$$m(\alpha(\mathcal{E}))=m(\mathcal{E})\:,$$
for all measurable subsets of $\mathcal{E}\subset(0,1)$.\\
(ii) For $f\in L^p$ we have
$$\int_0^1|T^nf(x)|^pdm(x) \leq g^{(n-1)p}\int_0^1|f(x)|^pdm(x),$$
where 
$$g=\frac{\sqrt{5}-1}{2}<1.$$
\end{lemma}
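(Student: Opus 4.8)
The plan is to handle the two parts separately: part (i) is the classical invariance of the Gauss measure, and it is exactly the input that makes the estimate in (ii) collapse to a pointwise bound.

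For (i) I would check invariance on the generating intervals $[0,x)$ and extend by uniqueness of measures. Since $\alpha(y)=\{1/y\}$ sends each $\big(\tfrac{1}{n+1},\tfrac1n\big]$ bijectively onto $(0,1)$ via $\alpha(y)=1/y-n$, the preimage is $\alpha^{-1}\big([0,x)\big)=\bigcup_{n\geq 1}\big(\tfrac{1}{n+x},\tfrac1n\big]$. Using $\int\frac{dy}{1+y}=\log(1+y)$ this gives a telescoping series whose $N$-th partial sum is $\log\frac{(N+1)(1+x)}{N+1+x}\to\log(1+x)$, so $m(\alpha^{-1}[0,x))=\frac{\log(1+x)}{\log 2}=m([0,x))$. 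Thus $\alpha$ preserves $m$; the operative form of (i) is the integral identity $\int_0^1 h(\alpha(x))\,dm(x)=\int_0^1 h(x)\,dm(x)$, and since $\alpha_n=\alpha^{\circ n}$ also preserves $m$ one gets $\int_0^1 h(\alpha_n(x))\,dm(x)=\int_0^1 h(x)\,dm(x)$ for all $n$ and all $h\in L^1(m)$. (The literal set-equality $m(\alpha(\mathcal E))=m(\mathcal E)$ must be read in this measure-preserving/preimage sense, as $\alpha$ is not injective.)

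For (ii) the first step is to iterate $T$. From $Tf(x)=xf(\alpha(x))=\alpha_0(x)f(\alpha_1(x))$ an easy induction yields
$$T^nf(x)=\alpha_0(x)\alpha_1(x)\cdots\alpha_{n-1}(x)\,f(\alpha_n(x))=\beta_{n-1}(x)\,f(\alpha_n(x)).$$
Hence $\int_0^1|T^nf|^p\,dm=\int_0^1\beta_{n-1}(x)^p\,|f(\alpha_n(x))|^p\,dm(x)$, and the whole matter reduces to bounding $\beta_{n-1}$ pointwise. Here I would use the continued-fraction identity $\beta_{n-1}(x)=\prod_{j=0}^{n-1}\alpha_j(x)=|q_{n-1}\,x-p_{n-1}|=\big(q_n+q_{n-1}\alpha_n(x)\big)^{-1}\leq q_n^{-1}$, together with a Fibonacci lower bound for $q_n$: the recursion $q_{k+1}=a_{k+1}q_k+q_{k-1}\geq q_k+q_{k-1}$ with $q_0=1$ forces $q_n\geq F_{n+1}$ (Fibonacci numbers, $F_1=F_2=1$), and an induction using $\varphi^2=\varphi+1$ with $\varphi=1/g=\tfrac{\sqrt5+1}{2}$ gives $F_{n+1}\geq\varphi^{\,n-1}=g^{-(n-1)}$. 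Therefore $\beta_{n-1}(x)\leq g^{\,n-1}$ for every $x\in X$.

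Substituting $\beta_{n-1}(x)^p\leq g^{(n-1)p}$ and then using (i) to replace $\int_0^1|f(\alpha_n(x))|^p\,dm(x)$ by $\int_0^1|f|^p\,dm$ yields
$$\int_0^1|T^nf(x)|^p\,dm(x)\leq g^{(n-1)p}\int_0^1|f(x)|^p\,dm(x),$$
as required. I expect the only genuine content to be the pointwise decay of $\beta_{n-1}$: recognizing that the cocycle $\alpha_0\cdots\alpha_{n-1}$ equals $|q_{n-1}x-p_{n-1}|$ and hence decays like $q_n^{-1}$ is precisely what lets the golden ratio $g$ enter with the sharp exponent $g^{\,n-1}$, after which the invariance from (i) finishes the estimate with no further loss. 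Everything else is routine.
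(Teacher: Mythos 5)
Your proof is correct, but it differs from the paper in an essential way: the paper does not actually prove this lemma at all. For part (i) it says the result is well known (it is the classical invariance of the Gauss measure), and for part (ii) it simply cites formulas (2.14)--(2.15) of the Marmi--Moussa--Yoccoz paper, where the operator $T$ arises as the case $\alpha=1$, $\nu=1$ of their operator $T_\nu$. What you have written is, in effect, a self-contained reconstruction of exactly what lies behind those citations: the telescoping computation of $m\bigl(\alpha^{-1}[0,x)\bigr)=\frac{\log(1+x)}{\log 2}$ is the standard proof of Gauss-measure invariance, and your chain $T^nf(x)=\beta_{n-1}(x)f(\alpha_n(x))$, $\beta_{n-1}(x)=|q_{n-1}x-p_{n-1}|=\bigl(q_n+q_{n-1}\alpha_n(x)\bigr)^{-1}\leq q_n^{-1}\leq F_{n+1}^{-1}\leq g^{\,n-1}$ is precisely how the golden-ratio constant enters in the source; note that the cruder pairing bound $\alpha_m\alpha_{m+1}\leq 1/2$ from Lemma \ref{x:molisegr} would only give $2^{-(n-1)/2}$, which is too weak, so your use of the convergent identity and the Fibonacci recursion is genuinely needed for the stated constant. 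Your parenthetical correction of part (i) is also apt: as stated, $m(\alpha(\mathcal{E}))=m(\mathcal{E})$ is false for forward images (the Gauss map is not injective), and the property actually used later in the paper (e.g.\ in Lemma \ref{x:siml10}) is preimage invariance, $m(\alpha^{-1}(\mathcal{E}))=m(\mathcal{E})$, equivalently $\int_0^1 h(\alpha_n(x))\,dm(x)=\int_0^1 h(x)\,dm(x)$, which is what you prove. In short: the paper buys brevity by outsourcing the lemma; your version makes the argument verifiable on the spot at the cost of a page of standard continued-fraction facts.
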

\begin{proof} (i) This result is well-known.\\
(ii) Marmi, Moussa and Yoccoz, consider in their paper \cite{Marmi} a generalized continued 
fraction algorithm, depending on a parameter $\alpha$, which is the usual continued fraction algorithm for the choice $\alpha=1$. The operator $T_\nu$ is defined in (2.5) of \cite{Marmi} and becomes $T$ for $\alpha=1$, $\nu=1$. Then, (ii) is the content of formulas (2.14), (2.15).
\end{proof}
\begin{definition}
For $n\in\mathbb{N}$, $x\in X$ we define
$$\mathcal{L}(x,n)=\sum_{\nu=0}^n(-1)^\nu(T^\nu l)(x),$$
where $l(x)=\log (1/x).$
\end{definition}
\begin{lemma}\label{x:kl1}
For each $L\in\mathbb{N}$, there is an $n_0=n_0(L)\in\mathbb{N}$, such that for $n\geq n_0-1$ we have:
$$\left|\int_0^1\mathcal{L}(x,n)^Ldm(x)-\int_0^1\mathcal{W}(x)^Ldm(x) \right| \leq L^Le^{-2L}.$$
\end{lemma}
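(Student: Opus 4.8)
The plan is to recognize $\mathcal{L}(x,n)$ as the $n$-th partial sum of the Wilton series and to control the tail in $L^L(dm)$ by means of the contraction estimate of Lemma \ref{x:lem6}(ii). First I would check by induction that $T^\nu l=\gamma_\nu$ for every $\nu\geq 0$: one has $T^0l(x)=\log(1/x)=\gamma_0(x)$, and if $T^{\nu-1}l=\gamma_{\nu-1}$ then $T^\nu l(x)=x\,\gamma_{\nu-1}(\alpha(x))=x\,\beta_{\nu-2}(\alpha(x))\log\frac{1}{\alpha_{\nu-1}(\alpha(x))}=\beta_{\nu-1}(x)\log\frac{1}{\alpha_\nu(x)}=\gamma_\nu(x)$, using $\alpha_k(\alpha(x))=\alpha_{k+1}(x)$. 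Hence $\mathcal{L}(x,n)=\sum_{\nu=0}^n(-1)^\nu\gamma_\nu(x)$ is exactly the $n$-th partial sum of the series defining $\mathcal{W}$, and writing $R_n:=\mathcal{W}-\mathcal{L}(\cdot,n)=\sum_{\nu>n}(-1)^\nu T^\nu l$, the task reduces to estimating $\int_0^1\mathcal{L}(x,n)^L\,dm-\int_0^1\mathcal{W}(x)^L\,dm$ quantitatively in $n$.

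The engine is Lemma \ref{x:lem6}(ii) with $p=L$, which upon taking $p$-th roots gives $\|T^\nu l\|_{L^L(dm)}\leq g^{\nu-1}\|l\|_{L^L(dm)}$. Since $g<1$, summing the resulting geometric series yields the uniform bounds
$$\|\mathcal{L}(\cdot,n)\|_{L^L(dm)},\ \|\mathcal{W}\|_{L^L(dm)}\leq \frac{\|l\|_{L^L(dm)}}{g(1-g)}=:M,\qquad \|R_n\|_{L^L(dm)}\leq \frac{g^n}{1-g}\|l\|_{L^L(dm)},$$
so in particular $\mathcal{L}(\cdot,n)\to\mathcal{W}$ in $L^L(dm)$. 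The norm $\|l\|_{L^L(dm)}$ is finite and easily estimated: since $dm\leq(\log 2)^{-1}dx$ on $(0,1)$ and $\int_0^1(\log(1/x))^L\,dx=\Gamma(L+1)$ (substitute $t=\log(1/x)$), one obtains $\|l\|_{L^L(dm)}^L\leq \Gamma(L+1)/\log 2$.

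To pass from norm convergence to convergence of the $L$-th moments, I would use the factorization $\mathcal{W}^L-\mathcal{L}^L=R_n\sum_{j=0}^{L-1}\mathcal{W}^j\mathcal{L}^{L-1-j}$ together with the generalized Hölder inequality applied termwise with all $L$ exponents equal to $L$, giving $\int_0^1|R_n|\,|\mathcal{W}|^j|\mathcal{L}|^{L-1-j}\,dm\leq\|R_n\|_{L^L(dm)}M^{L-1}$. Summing the $L$ terms leads to
$$\left|\int_0^1(\mathcal{W}^L-\mathcal{L}^L)\,dm\right|\leq L\,\|R_n\|_{L^L(dm)}\,M^{L-1}\leq C(L)\,g^n,\qquad C(L)=\frac{L}{(1-g)\log 2}\cdot\frac{\Gamma(L+1)}{(g(1-g))^{L-1}},$$
an explicit finite constant depending only on $L$. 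Because $g<1$, the right-hand side tends to $0$ as $n\to\infty$ for each fixed $L$, so choosing $n_0=n_0(L)$ large enough that $C(L)g^{\,n_0-1}\leq L^Le^{-2L}$ gives the claim for all $n\geq n_0-1$.

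The only genuine work lies in the Hölder bookkeeping and in checking $l\in L^L(dm)$ for every $L$ via the Gamma integral; after that the contraction lemma does everything. I do not expect a real obstacle here, precisely because $n_0$ is permitted to depend on $L$ arbitrarily: the substantive content is merely that the moments converge, and Lemma \ref{x:lem6}(ii) upgrades the pointwise/partial-sum convergence to quantitative $L^L$-convergence with a geometric rate in $n$, while the target $L^Le^{-2L}$ is a fixed positive number for each fixed $L$.
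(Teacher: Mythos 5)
Your proof is correct, and its skeleton matches the paper's: both arguments run on the contraction estimate of Lemma \ref{x:lem6}(ii), the factorization $\mathcal{W}^L-\mathcal{L}^L=(\mathcal{W}-\mathcal{L})\sum_{j=0}^{L-1}\mathcal{W}^{j}\mathcal{L}^{L-1-j}$, and a H\"older bound in which uniform $L^p$ bounds on $\mathcal{L}(\cdot,n)$ and $\mathcal{W}$ absorb everything except the geometrically small factor $\|\mathcal{L}(\cdot,n)-\mathcal{W}\|$. The execution differs in two places, mildly in your favor. First, the paper does not identify $T^\nu l=\gamma_\nu$ and sum the tail as you do; it instead invokes the functional equation $(1+T)\mathcal{W}=l$ (Lemma \ref{x:lem3}) to collapse the error into the single term $(-1)^{n}T^{n+1}\mathcal{W}$, to which the contraction is applied directly --- both routes give the geometric rate $g^n$. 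Second, the paper applies H\"older with exponents $(L,\,L/(L-1))$ and then needs an additional triangle-inequality and Cauchy--Schwarz step to bound $\|H(x,n,L)\|_{L/(L-1)}$ uniformly in $n$, concluding only qualitatively that the moment difference tends to $0$; your generalized H\"older with all exponents equal to $L$ is more direct and yields the explicit constant $C(L)$, hence an explicit admissible $n_0(L)$. You also stay in $L^L(dm)$ throughout, which matches the statement, whereas the paper moves between $dx$ and $dm$ (harmless, since the measures are comparable within a factor of $2$). One step you should spell out: writing $\mathcal{W}-\mathcal{L}(\cdot,n)=\sum_{\nu>n}(-1)^\nu T^\nu l$ as an identity in $L^L(dm)$ presumes that the a.e.\ pointwise sum defining $\mathcal{W}$ coincides with the norm limit of the partial sums; this holds because the partial sums are Cauchy in $L^L(dm)$ and converge pointwise a.e.\ (almost every $x$ is a Wilton number), so the two limits agree a.e., but it deserves a sentence --- the paper makes the analogous tacit assumption when it applies Lemma \ref{x:lem6}(ii) to $\mathcal{W}$ itself.
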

\begin{proof}
By Lemma \ref{x:lem3} we have
$$\mathcal{W}(x)+(T\mathcal{W})(x)=l(x)$$
or in obvious notation
$$((1+T)\mathcal{W})(x)=l(x).$$
Therefore
\begin{align*}
\mathcal{L}(x,n)&=\left((1-T\pm\cdots+(-1)^nT^n)l \right)(x)\tag{3}\\
&=\left((1-T\pm\cdots+(-1)^nT^n)(1+T)\right)\mathcal{W}(x)\\
&=\mathcal{W}(x)-(-1)^{n+1}T^{n+1}\mathcal{W}(x).
\end{align*}
By Lemma \ref{x:lem6}, the sequence $(\mathcal{L}(x,n))_{n=1}^{+\infty}$ forms a Cauchy sequence with respect to the $L_p$-norm
$$\|f\|_p:=\left(\int_0^1|f(x)|^pdx  \right)^{1/p}.$$
Therefore, we have
\[
\sup_{n\in\mathbb{N}}\|\mathcal{L}(x,n) \|_p<+\infty,\tag{4}
\]
for all $p>1$.\\
Setting
$$H(x,n,L)=\mathcal{L}(x,n)^{L-1}+\mathcal{L}(x,n)^{L-2}\mathcal{W}(x)+\cdots+\mathcal{W}(x)^{L-1},$$
we have
\[
\mathcal{L}(x,n)^L-\mathcal{W}(x)^L=\left( \mathcal{L}(x,n)-\mathcal{W}(x)\right)H(x,n,L).\tag{5}
\]
By H\"older's inequality and (5), we have
$$\left|\int_0^1\left(\mathcal{L}(x,n)^L-\mathcal{W}(x)^L \right)dx\right|\leq \|\mathcal{L}(x,n)-\mathcal{W}(x) \|_L\| H(x,n,L) \|_{L/(L-1)}.$$
By the triangle inequality for the $L/(L-1)$-norm, we have
$$\| H(x,n,L) \|_{L/(L-1)}\leq L\max_{0\leq m\leq L-1}\| \mathcal{L}(x,n)^{L-m-1}\mathcal{W}(x)^m \|_{L/(L-1)}.$$
By the Cauchy-Schwarz inequality, we have
\begin{align*}
\left( \| \mathcal{L}(x,n)^{L-m-1}\mathcal{W}(x)^m \|_{L/(L-1)}\right)^L&=\left(\int_0^1 \mathcal{L}(x,n)^{(L-m-1)\frac{L}{L-1}}\mathcal{W}(x)^{\frac{mL}{L-1}}dx\right)^{L-1} \\
&\leq \left(\int_0^1  \mathcal{L}(x,n)^{2(L-m-1)\frac{L}{L-1}} dx \right)^{(L-1)/2}\left(\int_0^1  \mathcal{W}(x)^{\frac{2mL}{L-1}}dx  \right)^{(L-1)/2}.\tag{6}
\end{align*}
From (4), (5) and (6), we obtain:
\[
\sup_{n\in\mathbb{N}}\| H(x,n,L) \|_{L/(L-1)}<+\infty\tag{7}
\]
From (3), (5), (7) and Lemma \ref{x:lem6} we get:
$$\lim_{n\rightarrow+\infty}\int_0^1 \left|\mathcal{L}(x,n)^L-\mathcal{W}(x)^L\right| dx=0.$$
The claim of Lemma \ref{x:kl1} follows.
\end{proof}
In the sequel, we always assume that $n\geq n_0(L)$ and that $L\geq L_0$, where $L_0$
is sufficiently large. We also denote by $\mathbb{N}_0$ the set $\mathbb{N}\cup\{0\}.$
\begin{lemma}\label{x:molisegr}
For $m\in\mathbb{N}_0$, $x\in X$, we have:
$$\alpha_m(x)\alpha_{m+1}(x)\leq \frac{1}{2}.$$
\end{lemma}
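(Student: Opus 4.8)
The plan is to reduce the claim to a statement about two consecutive iterates and then to a completely elementary estimate. Fix $m\in\mathbb{N}_0$ and write $y=\alpha_m(x)$; since $x\in X$, the number $y$ lies in $(0,1)$ and is irrational, and by the definition of $\alpha$ we have $\alpha_{m+1}(x)=\alpha(y)=\{1/y\}$. Thus the product $\alpha_m(x)\alpha_{m+1}(x)$ depends only on $y$, and it suffices to show that $y\{1/y\}\le 1/2$ for every irrational $y\in(0,1)$.

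Next I would make the floor explicit. By the first Lemma of the excerpt, $a_{m+1}(x)=\lfloor 1/\alpha_m(x)\rfloor=\lfloor 1/y\rfloor$; call this integer $n$. Since $0<y<1$ we have $1/y>1$, whence $n\ge 1$. Using $\alpha_{m+1}(x)=1/y-n$, we obtain
$$\alpha_m(x)\,\alpha_{m+1}(x)=y\Bigl(\frac{1}{y}-n\Bigr)=1-ny.$$
Hence the asserted inequality $\alpha_m(x)\alpha_{m+1}(x)\le 1/2$ is equivalent to the lower bound $ny\ge 1/2$.

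Finally I would establish this lower bound. Because $n=\lfloor 1/y\rfloor$, we have $n\le 1/y<n+1$, that is $y>1/(n+1)$, and therefore $ny>n/(n+1)$. For every integer $n\ge 1$ one has $n/(n+1)\ge 1/2$, which yields $ny>1/2$ and completes the argument (in fact with a strict inequality). I do not anticipate any genuine obstacle: the only facts that must be invoked are that $y<1$ forces $n\ge 1$, together with the monotonicity estimate $n/(n+1)\ge 1/2$; everything else is a direct computation.
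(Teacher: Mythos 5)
Your proof is correct. The reduction is the same as the paper's: you write $y=\alpha_m(x)$ and use $\alpha_{m+1}(x)=\alpha(\alpha_m(x))$ to reduce everything to the one-variable inequality $y\,\{1/y\}\le 1/2$, exactly as the paper reduces to $\alpha_m(x)\alpha_1(\alpha_m(x))$. Where you genuinely differ is in the core estimate. The paper argues by cases: for $y\in(0,1/2)$ the bound is immediate since $\alpha_1(y)<1$, while for $y\in(1/2,1)$ it substitutes $y=1/(1+u)$ with $0<u<1$, so that $y\,\alpha_1(y)=u/(1+u)$, and concludes by monotonicity of $u\mapsto u/(1+u)$. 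You instead exploit the identity $y\,\{1/y\}=1-ny$ with $n=\lfloor 1/y\rfloor\ge 1$, and the floor bound $y>1/(n+1)$, which gives $ny>n/(n+1)\ge 1/2$ uniformly in $n$. This buys a small but real gain in economy: a single computation replaces the case split (the paper's second case is essentially your case $n=1$, and its first case corresponds to $n\ge 2$), and it delivers the strict inequality $\alpha_m(x)\alpha_{m+1}(x)<1/2$ rather than just $\le 1/2$. Both arguments are elementary and complete; yours is arguably the cleaner formulation of the same underlying fact about the Gauss map.
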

\begin{proof}
By Definition \ref{x:protos}, we have $\alpha_1(x)<1$. Therefore, for $x\in(0,1/2)$ we have:
\[
x\alpha_1(x)<\frac{1}{2}.\tag{8}
\]
For $x\in\left(\frac{1}{2},1 \right)$ we set $x=\frac{1}{1+u}$, with $0<u<1$. Then, we have
$\alpha_1(x)=u$ and thus 
$$x\alpha_1(x)=\frac{u}{1+u}=:f(u)\:,\ \ \text{say}.$$
Since $f'(u)=(1+u)^{-2}>0$, we have
\[
x\alpha_1(x)\leq f(1)=\frac{1}{2}. \tag{9}
\]
For $m\geq 1$, we have 
$$\alpha_m(x)\alpha_{m+1}(x)=\alpha_m(x)\alpha_1(\alpha_m(x))\leq \frac{1}{2},$$
by (8) and (9).
\end{proof}
\begin{definition}\label{x:simd6}
Let $d,h\in\mathbb{N}_0$, $h\geq 1$, $u,v\in(0,+\infty)$. Then, we define:
$$ \mathcal{J}(d,h,u,v)=\left\{ x\in X\::\: T^dl(x)\geq u\ \ \text{and}\ \ T^{d+h}l(x)\geq v \right\}. $$
\end{definition}
\begin{lemma}\label{x:siml7}
It holds
$$m\left(\mathcal{J}(d,h,u,v) \right)\leq 2\exp\left( -2^{\frac{h-2}{2}}v\exp\left(2^{\frac{d-2}{2}}u \right) \right).$$
\end{lemma}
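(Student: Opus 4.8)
The plan is to collapse the two inequalities defining $\mathcal{J}(d,h,u,v)$ into a single upper bound for the iterate $\alpha_{d+h}(x)$, and then to estimate the measure of the resulting level set by invariance of $m$. First I would record the identity
$$T^\nu l(x)=\gamma_\nu(x)=\beta_{\nu-1}(x)\log\frac{1}{\alpha_\nu(x)},$$
which follows by induction from $Tf(x)=xf(\alpha(x))$, $l(x)=\log(1/x)$, and the relations $\alpha_k(\alpha(x))=\alpha_{k+1}(x)$, $\beta_k=\beta_{k-1}\alpha_k$ (the inductive step simply pushes one factor of $x$ into the product $\beta$). With this identity in hand, the condition $T^dl(x)\ge u$ reads $\beta_{d-1}\log(1/\alpha_d)\ge u$, i.e.
$$\alpha_d(x)\le\exp\!\left(-\frac{u}{\beta_{d-1}(x)}\right),$$
and likewise $T^{d+h}l(x)\ge v$ gives $\alpha_{d+h}(x)\le\exp(-v/\beta_{d+h-1}(x))$.

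The heart of the argument is a lower bound for $1/\beta_{d+h-1}$. I would factor
$$\frac{1}{\beta_{d+h-1}}=\frac{1}{\beta_{d-1}\alpha_d}\cdot\prod_{j=d+1}^{d+h-1}\frac{1}{\alpha_j}$$
and treat the two pieces separately. For the product over the ``fresh'' indices $d+1,\dots,d+h-1$, grouping consecutive factors and applying Lemma \ref{x:molisegr} gives $\prod_{j=d+1}^{d+h-1}\alpha_j\le 2^{-\lfloor(h-1)/2\rfloor}\le 2^{-(h-2)/2}$. For the first piece I would combine $\beta_{d-1}\le 1$ with the displayed bound on $\alpha_d$ to obtain
$$\frac{1}{\beta_{d-1}\alpha_d}\ge\exp\!\left(\frac{u}{\beta_{d-1}}\right)\ge\exp\!\left(2^{(d-2)/2}u\right),$$
the final step using $\beta_{d-1}\le 2^{-\lfloor d/2\rfloor}\le 2^{-(d-2)/2}$, again from Lemma \ref{x:molisegr}. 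Multiplying the two estimates yields $1/\beta_{d+h-1}\ge 2^{(h-2)/2}\exp(2^{(d-2)/2}u)$, so that on $\mathcal{J}(d,h,u,v)$,
$$\alpha_{d+h}(x)\le t:=\exp\!\left(-2^{(h-2)/2}v\exp\!\left(2^{(d-2)/2}u\right)\right).$$

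Finally I would pass to the measure. Since $\mathcal{J}(d,h,u,v)\subseteq\{x\in X:\alpha_{d+h}(x)\le t\}$ and $\alpha_{d+h}=\alpha^{d+h}$, the invariance of $m$ under $\alpha$ (Lemma \ref{x:lem6}(i)) gives $m(\{\alpha_{d+h}\le t\})=m((0,t])$, and
$$m((0,t])=\frac{1}{\log 2}\int_0^t\frac{dx}{1+x}=\frac{\log(1+t)}{\log 2}\le\frac{t}{\log 2}\le 2t,$$
which is exactly the claimed bound. I expect the only delicate points to be the inductive identity $T^\nu l=\gamma_\nu$ and the parity bookkeeping that converts the pairing estimate of Lemma \ref{x:molisegr} into the precise exponents $(h-2)/2$ and $(d-2)/2$ (with the edge cases $d=0$, handled by $\beta_{-1}=1$, and $h=1$, where the fresh product is empty); the passage to the measure is then routine.
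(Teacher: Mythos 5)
Your proof is correct and follows essentially the same route as the paper's: bound the pre-factor products $\alpha_0\cdots\alpha_{d-1}$ and $\alpha_{d+1}\cdots\alpha_{d+h-1}$ by pairing via Lemma \ref{x:molisegr}, use the first condition to get $\alpha_d(x)\leq\exp(-2^{(d-2)/2}u)$, chain this into the second condition to force $\alpha_{d+h}(x)\leq t$, and conclude with the measure bound. Your final step is in fact slightly more careful than the paper's, which invokes only $m(\mathcal{E})\leq 2\mu(\mathcal{E})$ and leaves the needed invariance $m(\{\alpha_{d+h}\leq t\})=m((0,t])$ implicit, whereas you spell it out.
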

\begin{proof}
Let $x\in \mathcal{J}(d,h,u,v)$. This implies that 
\[
T^dl(x)=\alpha_0(x)\alpha_1(x)\cdots\alpha_{d-1}(x)l(\alpha_d(x))\geq u\tag{10}
\]
and
\[
T^{d+h}l(x)=\alpha_0(x)\cdots \alpha_d(x)\cdots\alpha_{d+h-1}(x)l(\alpha_{d+h}(x))\geq v.\tag{11}
\]
By partitioning the products in (10) and (11) into parts $\alpha_m(x)\alpha_{m+1}(x)$ and 
possibly one additional factor $\alpha_k(x)$, we obtain from Lemma \ref{x:molisegr} the following:
\[
\alpha_0(x)\alpha_1(x)\cdots\alpha_{d-1}(x)\leq 2^{-\frac{d-2}{2}}\tag{12}
\]
and
\[
\alpha_{d+1}(x)\cdots\alpha_{d+h-1}(x)\leq 2^{-\frac{h-2}{2}}.\tag{13}
\]
From (10) and (12), we have:
$$l(\alpha_d(x))\geq 2^{\frac{d-2}{2}}u$$
and thus 
\[
\alpha_d(x)\leq \exp\left(-2^{\frac{d-2}{2}}u \right).\tag{14}
\]
From (11) and (13) we get:
\[
l(\alpha_{d+h}(x))\geq2^{\frac{h-2}{2}}v\exp\left(2^{\frac{d-2}{2}}u \right). \tag{15}
\]
The claim of Lemma \ref{x:siml7} follows from (15), since 
$$m(\mathcal{E})\leq 2\mu(\mathcal{E}),$$
with $\mu$ being the Lebesgue measure.
\end{proof}
\begin{definition}\label{x:simdef77}
We set $j_0=L-\left\lfloor \frac{L}{100}\right\rfloor$, $C_2=1/400$. For $j\in\mathbb{Z}$,
$j\leq j_0$, we define the intervals:
$$I(L,j)=\left(\exp(-L+j-1),\: \exp(-L+j) \right).$$
For $\nu\in\mathbb{N}_0$, we set
$$a(L,\nu):=\exp(-C_2L+\nu).$$
$$\mathcal{T}(L,j,0)=\left\{ x\in I(L,j)\cap X\::\: |\mathcal{L}(x,n)-l(x)|\leq \exp(-C_2L) \right\},$$
and for $\nu\in\mathbb{N}$, we set
$$\mathcal{T}(L,j,\nu)=\left\{ x\in I(L,j)\cap X\::\:a(L,\nu-1)\leq |\mathcal{L}(x,n)-l(x)|\leq a(L,\nu) \right\}.$$
For $\nu$, $h\in\mathbb{Z}$, $\nu\geq 1$, $h\geq 0$, we set
$$U(L,j,\nu,h)=\left\{ x\in \mathcal{T}(L,j,\nu)\::\: T^hl(x)\geq 2^{-h}a(L,\nu-1)\right\}.$$
\end{definition}
\begin{lemma}\label{x:siml8}
For $\nu\geq 1$, we have 
$$m(\mathcal{T}(L,j,\nu))\leq 3\exp\left(-\frac{1}{200}\exp\left(-C_2L+\nu-1+\frac{1}{2}\left(L-j\right)\right)\right) $$    
\end{lemma}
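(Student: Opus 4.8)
The plan is to cover $\mathcal{T}(L,j,\nu)$ by the sets $U(L,j,\nu,h)$ and to estimate each of the latter through Lemma \ref{x:siml7}. First I would record that, writing $\mathcal{L}(x,n)-l(x)=\sum_{i=1}^n(-1)^iT^il(x)$ and using that $T^il(x)\ge 0$ for every $i$ (since $l\ge 0$ on $(0,1)$ and $Tf(x)=xf(\alpha(x))$ preserves nonnegativity), one has $\sum_{i=1}^nT^il(x)\ge|\mathcal{L}(x,n)-l(x)|$. Hence for $x\in\mathcal{T}(L,j,\nu)$ with $\nu\ge 1$ the lower bound $|\mathcal{L}(x,n)-l(x)|\ge a(L,\nu-1)$ yields $\sum_{i=1}^nT^il(x)\ge a(L,\nu-1)$. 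Since $\sum_{i\ge 1}2^{-i}=1$, a dyadic pigeonhole argument forces $T^hl(x)\ge 2^{-h}a(L,\nu-1)$ for at least one $h\ge 1$ (otherwise $\sum_{i=1}^nT^il(x)<a(L,\nu-1)\sum_{i\ge 1}2^{-i}=a(L,\nu-1)$), i.e.\ $x\in U(L,j,\nu,h)$. This gives the covering $\mathcal{T}(L,j,\nu)\subseteq\bigcup_{h\ge 1}U(L,j,\nu,h)$.

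Next I would estimate a single set $U(L,j,\nu,h)$. If $x\in U(L,j,\nu,h)$, then $x\in I(L,j)$ gives $x<\exp(-(L-j))$, hence $T^0l(x)=l(x)>L-j$, while the defining inequality gives $T^hl(x)\ge 2^{-h}a(L,\nu-1)$. Thus $x\in\mathcal{J}(0,h,L-j,2^{-h}a(L,\nu-1))$, and Lemma \ref{x:siml7} with $d=0$ produces
$$m(U(L,j,\nu,h))\le 2\exp\left(-2^{\frac{h-2}{2}}\,2^{-h}a(L,\nu-1)\,\exp\left(\tfrac12(L-j)\right)\right)=2\exp\left(-2^{-\frac{h+2}{2}}E\right),$$
where $E=a(L,\nu-1)\exp(\tfrac12(L-j))=\exp\left(-C_2L+\nu-1+\tfrac12(L-j)\right)$ is exactly the quantity in the target bound. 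Summing over the covering then gives $m(\mathcal{T}(L,j,\nu))\le\sum_{h\ge 1}2\exp\left(-2^{-(h+2)/2}E\right)$.

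The hard part will be the summation over $h$, and this is the step I expect to require the most work. Because the exponents $2^{-(h+2)/2}E$ \emph{decrease} in $h$, the per-set estimate degrades as $h$ grows and cannot be summed over all $h\ge 1$ unaided. Two inputs should rescue it. First, the sharp interval bound $x<\exp(-(L-j))$ (stronger than the factor $\exp(\tfrac12(L-j))$ that Lemma \ref{x:siml7} extracts generically), fed in through $\beta_{h-1}(x)=x\,\alpha_1(x)\cdots\alpha_{h-1}(x)$ together with the pairing estimate of Lemma \ref{x:molisegr}, strengthens each exponent. Second, the \emph{upper} constraint $|\mathcal{L}(x,n)-l(x)|\le a(L,\nu)$ that also defines $\mathcal{T}(L,j,\nu)$ should be used to limit the range of $h$ that can actually occur, so that only boundedly many sets $U(L,j,\nu,h)$ contribute. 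Once the range of $h$ is truncated, the series is dominated by the term $h=1$, which contributes $2\exp(-2^{-3/2}E)$; since $2^{-3/2}>\tfrac{1}{200}$, this term already lies well below $3\exp(-\tfrac{1}{200}E)$, and the finitely many remaining terms are absorbed into the constant, giving $m(\mathcal{T}(L,j,\nu))\le 3\exp(-\tfrac{1}{200}E)$. Making this truncation precise—quantifying how the upper bound $a(L,\nu)$ forces $T^hl(x)<2^{-h}a(L,\nu-1)$ once $h$ is large—is the main obstacle.
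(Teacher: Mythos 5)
Your covering step (the dyadic pigeonhole giving $\mathcal{T}(L,j,\nu)\subseteq\bigcup_{1\le h\le n}\mathcal{J}\bigl(0,h,L-j,2^{-h}a(L,\nu-1)\bigr)$) and your application of Lemma \ref{x:siml7} are both correct: with $d=0$, $u=L-j$, $v=2^{-h}a(L,\nu-1)$ the lemma gives exactly your exponent $-2^{-(h+2)/2}E$. But the step you flag as ``the main obstacle'' is a genuine gap, not a technicality: since $2^{-(h+2)/2}E\to 0$ as $h\to\infty$, the per-$h$ bounds tend to $2$, and the sum over $1\le h\le n$ grows like $n$ (and $n\ge n_0(L)$ is arbitrary). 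Neither of your proposed rescues closes it. Replacing $\exp\bigl(\tfrac12(L-j)\bigr)$ by the full factor $\exp(L-j)$ only changes $E$ into $a(L,\nu-1)e^{L-j}$; the coefficient $2^{-(h+2)/2}$ still tends to zero, so the tail of the sum still diverges. And the upper constraint $|\mathcal{L}(x,n)-l(x)|\le a(L,\nu)$ cannot truncate the range of $h$: the series is alternating, so two consecutive large terms $T^hl(x)\approx T^{h+1}l(x)$ cancel and are perfectly compatible with the upper bound, hence points with $T^hl(x)\ge 2^{-h}a(L,\nu-1)$ for arbitrarily large $h$ are not excluded.

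The actual repair is to change the weights in the pigeonhole: with weights $2^{-h/4}$ in place of $2^{-h}$, some $h$ must satisfy $T^hl(x)\ge(1-2^{-1/4})2^{-h/4}a(L,\nu-1)$; then Lemma \ref{x:molisegr} and $x\le e^{-(L-j)}$ give $l(\alpha_h(x))\ge c\,2^{h/4}a(L,\nu-1)e^{L-j}$ with $c=(1-2^{-1/4})/2$, so by invariance of $m$ under $\alpha$ the $h$-th set has measure at most $2\exp\bigl(-c\,2^{h/4}a(L,\nu-1)e^{L-j}\bigr)$. Now the exponents \emph{grow} with $h$, the sum is dominated by its $h=1$ term, and the stated bound follows with room to spare; this is precisely the weighting $2^{-h/4}$ that the paper itself uses in Definition \ref{x:def217} and Lemmas \ref{x:lem2180}, \ref{x:lem2230}. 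You should also know that your correct computation exposes a flaw in the paper's own proof of this lemma: its display (20) interchanges the factors $2^{(h-2)/2}$ and $2^{(d-2)/2}$ of Lemma \ref{x:siml7}, claiming the bound $2\exp\bigl(-2^{-(h+1)}a(L,\nu-1)\exp(2^{(h-2)/2}(L-j))\bigr)$, which is not what the lemma yields and is in fact false for large $h$ (a cylinder-set construction with digits $a_2=\dots=a_h=1$ gives sets of measure decaying only exponentially in $h$). It is only this spurious doubly-exponential decay that makes the paper's ``summation over $h$'' converge. So your approach mirrors the paper's, the difficulty you isolated is exactly where the paper's argument breaks, and the reweighted pigeonhole above is what is needed to make either version rigorous.
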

\begin{proof}
Let $x\in\mathcal{T}(L,j,\nu)$, $\nu\in\mathbb{N}$. From $x\in I(L,j)\cap X$, we have
\[
l(x)= T^0l(x)\geq L-j.\tag{16}
\]
Since
\[
\mathcal{L}(x,n)-l(x)=-Tl(x)+T^1l(x)\mp\cdots+(-1)^{n}T^{n}l(x) \tag{17}
\]
we have
\[
T^hl(x)\geq 2^{-h}a(L,\nu-1),\tag{18}
\]
for at least one $h\geq 1$.\\
From (16), (17), (18) and Definition \ref{x:simdef77}, we have
\[x\in\bigcup_{1\leq h\leq n}\mathcal{J}(0,h,L-j,2^{-h}\exp(-C_2L+\nu-1)). \tag{19}\]
From Lemma \ref{x:siml7} we obtain
\begin{align*}
&m\left(\mathcal{J}(0,h,L-j,2^{-h}\exp(-C_2L+\nu-1))\right)\\
&\leq 2\exp\left( -2^{-(h+1)} \exp\left(\left(-C_2L+\nu-1\right)+2^{\frac{h-2}{2}}\left(L-j\right)  \right)\right).\tag{20}
\end{align*}
From (19) and (20) the claim of Lemma \ref{x:siml8} follows by summation over $h$.
\end{proof}
\begin{definition}\label{x:212121}
We set
\[
x_0=\exp\left(-\left\lfloor \frac{L}{100}\right\rfloor\right).\tag{21}
\]
\end{definition}
\begin{lemma}\label{x:siml99} Let $L\in\mathbb{N}$, then\\
\noindent{(i)}
$$\int_0^1l(x)^Ldx=\Gamma(L+1),$$
\noindent{(ii)}
$$\int_{x_0}^1l(x)^Ldx=O\left(\Gamma(L+1)\exp\left(-\frac{L}{100} \right) \right).$$
\noindent{(iii)} There is a positive constant $C_3>0$, such that\\
$$\int_0^{x_0}|\mathcal{L}(x,n_0)^L-l(x)^L|dm(x)\leq \Gamma(L+1)\exp(-C_3L).$$
\end{lemma}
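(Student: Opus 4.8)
The plan is as follows.

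\textbf{Parts (i) and (ii).} Both should follow at once from the substitution $x=e^{-t}$, which sends $l(x)=\log(1/x)$ to $t$ and $dx$ to $e^{-t}\,dt$. For (i) the range $x\in(0,1)$ becomes $t\in(0,+\infty)$, so $\int_0^1 l(x)^L\,dx=\int_0^\infty t^Le^{-t}\,dt=\Gamma(L+1)$ by the very definition of the Gamma function. For (ii) the lower limit $x_0=\exp(-\lfloor L/100\rfloor)$ becomes $t=\lfloor L/100\rfloor$, so $\int_{x_0}^1 l(x)^L\,dx=\int_0^{\lfloor L/100\rfloor}t^Le^{-t}\,dt$. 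Since $t^Le^{-t}$ is increasing on $[0,L]$ and $\lfloor L/100\rfloor<L$, I would bound the integrand by its value at the right endpoint, giving $\int_0^{\lfloor L/100\rfloor}t^Le^{-t}\,dt\le \lfloor L/100\rfloor^{\,L+1}e^{-\lfloor L/100\rfloor}$. Comparing with $\Gamma(L+1)$ through Stirling's formula makes the ratio of order $(e/100)^{L}$ up to polynomial factors, which is far below the claimed $\exp(-L/100)$; hence (ii) holds with room to spare.

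\textbf{Part (iii): the setup.} This is the substantial case. On $(0,x_0)$ one has $l(x)\ge\lfloor L/100\rfloor$, and I would start from the algebraic identity $a^L-b^L=(a-b)\sum_{i=0}^{L-1}a^{L-1-i}b^{i}$ with $a=\mathcal L(x,n_0)$, $b=l(x)$, which yields the pointwise bound
$$|\mathcal L(x,n_0)^L-l(x)^L|\le L\,|\mathcal L(x,n_0)-l(x)|\,\max(|\mathcal L(x,n_0)|,l(x))^{L-1}.$$
Next I would write $(0,x_0)=\bigcup_{j\le j_0}I(L,j)$, the choice $j_0=L-\lfloor L/100\rfloor$ being exactly what makes $\exp(-L+j_0)=x_0$, and on each $I(L,j)$ I would split $I(L,j)\cap X$ into the sets $\mathcal T(L,j,\nu)$, $\nu\ge0$, of Definition \ref{x:simdef77} (with $n=n_0$), which partition it according to the size of $|\mathcal L(x,n_0)-l(x)|$. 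Here $l(x)\in(L-j,L-j+1)$ and $m(I(L,j))\le 2\mu(I(L,j))\le 2e^{-(L-j)}$.

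\textbf{Part (iii): principal and error contributions.} On the principal set $\mathcal T(L,j,0)$ one has $|\mathcal L(x,n_0)-l(x)|\le\exp(-C_2L)$ with $C_2=1/400$ and $\max(|\mathcal L|,l)\le L-j+2$, so this set contributes at most $2Le^{-C_2L}(L-j+2)^{L-1}e^{-(L-j)}$. Summing over $s=L-j\ge\lfloor L/100\rfloor$, the series $\sum_s (s+2)^{L-1}e^{-s}$ is comparable to $\Gamma(L)=\Gamma(L+1)/L$ (its mass concentrates near $s\approx L$, which lies in the range of summation), so the principal part is $O\!\big(e^{-C_2L}\Gamma(L+1)\big)$. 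For the sets $\mathcal T(L,j,\nu)$ with $\nu\ge1$ I would use $|\mathcal L-l|\le a(L,\nu)=\exp(-C_2L+\nu)$ together with the doubly-exponential measure estimate of Lemma \ref{x:siml8}, $m(\mathcal T(L,j,\nu))\le 3\exp\!\big(-\tfrac1{200}\exp(-C_2L+\nu-1+\tfrac12(L-j))\big)$. Because $-C_2L+\tfrac12(L-j)>0$ once $L-j\ge\lfloor L/100\rfloor$, this measure decays doubly-exponentially in both $\nu$ and $s=L-j$ and dominates the at most $(s+1+e^{-C_2L+\nu})^{L-1}$ growth of the $\max$-factor, so the double sum over $j$ and $\nu$ is negligible next to the principal part. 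Combining, one obtains $\Gamma(L+1)\exp(-C_3L)$ for any fixed $C_3<C_2=1/400$ and $L$ large.

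\textbf{The main obstacle.} The delicate point is wholly in part (iii): on the bad sets $\mathcal T(L,j,\nu)$, $\nu\ge1$, the quantity $\mathcal L(x,n_0)$ may deviate substantially from $l(x)$ and the factor $\max(|\mathcal L|,l)^{L-1}$ can be enormous, so one must check that the super-exponentially small measure furnished by Lemma \ref{x:siml8} defeats this growth uniformly in $j$ and $\nu$. Confirming that the exponent $-C_2L+\tfrac12(L-j)$ remains positive throughout the relevant range of $j$ — which is precisely what the calibrated choices $C_2=1/400$ and $j_0=L-\lfloor L/100\rfloor$ guarantee — is the crux; once that positivity is secured the remaining estimates reduce to routine geometric and Gamma-type summations.
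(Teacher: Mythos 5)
Your proposal is correct and follows essentially the same route as the paper's own proof: the same decomposition of $(0,x_0)$ into the intervals $I(L,j)$ and the sets $\mathcal{T}(L,j,\nu)$ of Definition \ref{x:simdef77}, with $\nu=0$ giving the principal term and the doubly-exponential measure bound of Lemma \ref{x:siml8} defeating the $\nu\ge 1$ contributions, exactly as in the paper's Cases 1 and 2. The only cosmetic differences are that the paper writes $\mathcal{L}(x,n)=l(x)(1+R(x,n))$ rather than using your factorization of $a^L-b^L$, and compares each $I(L,j)$-contribution to $\int_{I(L,j)}l(x)^L\,dx$ before summing over $j$ via (i)--(ii) instead of summing directly into a Gamma-type series, while parts (i) and (ii), which you prove by the substitution $x=e^{-t}$, are simply quoted as well known in the paper.
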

\begin{proof} Formulas (i) and (ii) are well known.\\
For the proof of (iii), we write
\[
\mathcal{L}(x,n)=l(x)(1+R(x,n)).\tag{22}
\]
Let $j\leq j_0$. Then by Definition \ref{x:simdef77}, for $x\in\mathcal{T}(L,j,\nu)$, we have
$$l(x)\geq L-j$$
and therefore we get
\[
l(x)\geq \frac{L}{200}. \tag{23}
 \]
 By Definition \ref{x:simdef77} we also have
 \[
 |\mathcal{L}(x,n)-l(x)|\leq \exp(-C_2L+\nu).\tag{24}
 \]
 From (23) and (24), we obtain
 \[
 |R(x,n)|\leq\frac{200}{L}\exp(-C_2L+\nu).\tag{25}
 \]
 We distinguish two cases:\\
 \textit{Case 1:} Let $\nu=0$.\\
 From (25) we have
 \[
|R(x,n)|\leq \exp\left(-\frac{C_2}{2}L \right), \tag{26}
 \] 
 $$\int_{\mathcal{T}(L,j,0)}|\mathcal{L}(x,n_0)^L-l(x)^L|dx\leq \int_{\mathcal{T}(L,j,0)} l(x)^L|(1+R(x,n))^L-1|dx.$$
 From (25) and (26), we have
 \[
\int_{\mathcal{T}(L,j,0)}|\mathcal{L}(x,n)^L-l(x)^L|dx\leq\exp\left(-\frac{C_2}{3}L \right)\int_{\mathcal{T}(L,j,0)}l(x)^Ldx  \tag{27}
 \]
\textit{Case 2:} Let $\nu\geq 1$.\\
Because of the fact that
$$L-j\geq \frac{L}{100},$$
we have
$$\max_{x\in I(L,j)} (l(x)^L) \leq e^{200}\min_{x\in I(L,j)} (l(x)^L) $$
and therefore from (25), it follows that
\begin{align*}
&\int_{\mathcal{T}(L,j,\nu)}|\mathcal{L}(x,n)^L-l(x)^L|dx\leq \exp\left(-C_2L+\nu\right)m\left(\mathcal{T}(L,j,\nu)\right)\max_{x\in I(L,j)} (l(x)^L)\tag{28}\\ 
&\leq 3e^{200}\exp\left(-\frac{1}{200}\exp\left(-C_2L+\nu-1+\frac{1}{2}\left(L-j\right)\right)\exp\left(\left(-C_2L+\nu\right)L\right)\right)\min_{x\in I(L,j)} (l(x)^L).
\end{align*}
From (27) and (28), we obtain for $j\leq j_0$, the following
\[
\int_{I(L,j)\cap X}|\mathcal{L}(x,n)^L-l(x)^L|dx \leq \exp\left(-\frac{C_2}{3}L\right)\int_{I(L,j)}l(x)^Ldx.\tag{29}
\]
The result of Lemma \ref{x:siml99} (iii),  now follows from (i), (ii) by summing (29) for $j\leq j_0$. 
\end{proof}
\begin{lemma}\label{x:siml10}
There is a constant $C_4>0$, such that
$$\int_{(1+x_0)^{-1}}^1|\mathcal{L}(x,n)^L-(-1)^Ll(\alpha_1(x))^L|dm(x)\leq \Gamma(L+1)\exp(-C_4L).$$
\end{lemma}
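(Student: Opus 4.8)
The plan is to exploit that on the interval $((1+x_0)^{-1},1)$ the map $\alpha_1$ acts as $x\mapsto 1/x-1$, so that $y:=\alpha_1(x)=1/x-1$ ranges over $(0,x_0)$, and to transport the whole estimate to a neighbourhood of $0$, where Lemma \ref{x:siml99}(iii) already controls the deviation of $\mathcal{L}$ from $l$. The starting point is the unfolding identity
$$\mathcal{L}(x,n)=l(x)-x\,\mathcal{L}(\alpha_1(x),n-1),$$
which follows at once from the definition of $\mathcal{L}$ and the relation $(T^\nu l)(x)=x\,(T^{\nu-1}l)(\alpha(x))$ by shifting the summation index. Since $l(x)=\log(1+y)$ is tiny and $x=1/(1+y)$ is close to $1$, the dominant term is $-\mathcal{L}(y,n-1)\approx -l(y)=-l(\alpha_1(x))$, which is exactly why the comparison is made with $(-1)^Ll(\alpha_1(x))^L=(-l(y))^L$.

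First I would change variables by $x=1/(1+y)$. A direct computation gives $dm(x)=\tfrac{1}{2+y}\,dm(y)$, with $\tfrac{1}{2+y}\le\tfrac12$, so the integral becomes $\int_0^{x_0}\big|\mathcal{L}(x,n)^L-(-l(y))^L\big|\,\tfrac{dm(y)}{2+y}$ with $x=1/(1+y)$. I would then telescope the integrand through the intermediate quantities $B=-x\,\mathcal{L}(y,n-1)$ and $C=-x\,l(y)$, writing $\mathcal{L}(x,n)^L-(-l(y))^L=(A^L-B^L)+(B^L-C^L)+(C^L-D^L)$ with $A=\mathcal{L}(x,n)$ and $D=-l(y)$, and estimate the three differences separately.

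The middle and last differences are clean. Since $B^L-C^L=(-x)^L\big(\mathcal{L}(y,n-1)^L-l(y)^L\big)$ and $x^L\le1$, the factor $\tfrac{1}{2+y}\le\tfrac12$ together with Lemma \ref{x:siml99}(iii) (used for the index $n-1\ge n_0-1$) gives $\int_0^{x_0}|B^L-C^L|\tfrac{dm(y)}{2+y}\le\tfrac12\Gamma(L+1)e^{-C_3L}$. For the last one, $C^L-D^L=(-1)^Ll(y)^L(x^L-1)$ with $|x^L-1|=1-(1+y)^{-L}\le Ly$, which reduces its contribution to $L\int_0^{x_0}y\,l(y)^L\,dy$; the substitution $y=e^{-t}$ shows $\int_0^{x_0}y\,l(y)^L\,dy\le 2^{-L-1}\Gamma(L+1)$, so this term is $\le\Gamma(L+1)e^{-CL}$ for any $C<\log2$. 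The first difference is the delicate one: expanding binomially, $A^L-B^L=\sum_{k=1}^L\binom{L}{k}l(x)^k B^{L-k}$, and here $l(x)=\log(1+y)\le y< x_0=\exp(-\lfloor L/100\rfloor)$ is extremely small. Using $\binom{L}{k}\Gamma(L-k+1)=\Gamma(L+1)/k!$ and $|B|\le|\mathcal{L}(y,n-1)|$, the estimate reduces to a uniform moment bound $\int_0^{x_0}|\mathcal{L}(y,n-1)|^{M}\,dm(y)\le C_0\,\Gamma(M+1)$ with $C_0$ absolute; granting this, $\int_0^{x_0}|A^L-B^L|\tfrac{dm(y)}{2+y}\le\tfrac{C_0}{2}\Gamma(L+1)\sum_{k\ge1}\tfrac{x_0^k}{k!}\le C_0\,\Gamma(L+1)\,x_0\le\Gamma(L+1)e^{-L/100+O(1)}$.

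I expect the main obstacle to be precisely this uniform moment bound: because the $L$-th power amplifies errors, any spurious factor such as $2^M$ would destroy the summability of the binomial series, so the bound must come with an absolute constant. I would obtain it by Minkowski's inequality, $\|\mathcal{L}(\cdot,n-1)\|_{L^M((0,x_0),dm)}\le\|l\|_{L^M}+\|\mathcal{L}(\cdot,n-1)-l\|_{L^M}$, bounding $\|l\|_{L^M}^M=\int_0^{x_0}l(y)^M\,dm\le\Gamma(M+1)/\log2$ and controlling the deviation term by the same good-set/bad-set decomposition used in the proof of Lemma \ref{x:siml99}, where on the bulk $|\mathcal{L}-l|$ is negligible compared with $l$ and the exceptional sets $\mathcal{T}(L,j,\nu)$ have the small measure furnished by Lemma \ref{x:siml8}. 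Summing the three contributions and choosing $C_4>0$ smaller than $C_3$, $\log 2$ and $1/100$ then completes the proof.
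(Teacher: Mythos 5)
Your proposal is correct in substance and shares the paper's skeleton---the unfolding identity $\mathcal{L}(x,n)=l(x)-x\,\mathcal{L}(\alpha_1(x),n-1)$, transport of the integral from $((1+x_0)^{-1},1)$ to $(0,x_0)$, and an application of Lemma \ref{x:siml99}(iii) at level $n-1$---but the two treatments of the error term genuinely diverge. The paper bounds the remainder \emph{multiplicatively}: it writes $\mathcal{L}(x,n)=-\mathcal{L}(\alpha_1(x),n-1)+R(x)$ with $|R(x)|\le \varepsilon\max\left(|\mathcal{L}(\alpha_1(x),n-1)|,1\right)$, $\varepsilon=\exp(-\tfrac{C_2}{2}L)$, so that $|\mathcal{L}(x,n)^L-(-1)^L\mathcal{L}(\alpha_1(x),n-1)^L|\le\left((1+\varepsilon)^L-1\right)\max\left(|\mathcal{L}(\alpha_1(x),n-1)|,1\right)^L$; consequently only the single exponent-$L$ moment $\int_0^{x_0}|\mathcal{L}(y,n-1)|^L\,dm(y)\le C\,\Gamma(L+1)$ is needed, and that follows at once from Lemma \ref{x:siml99}(i),(iii). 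You instead treat the perturbation $l(x)\le x_0$ \emph{additively} and expand binomially, which forces you to prove $\int_0^{x_0}|\mathcal{L}(y,n-1)|^{M}\,dm(y)\le C_0\,\Gamma(M+1)$ for \emph{every} $M\le L$ with an absolute constant $C_0$---a statement not contained in the paper, since Lemma \ref{x:siml99} ties the exponent $L$ to the interval endpoint $x_0=\exp(-\lfloor L/100\rfloor)$. You correctly identify this as the crux (the easy bound via Lemma \ref{x:lem6}(ii) and Minkowski carries a fatal factor $C^M$ with $C>1$), and your proposed remedy does work: the sets $\mathcal{T}(L,j,\nu)$ and the doubly-exponential measure bound of Lemma \ref{x:siml8} are uniform enough in the exponent that rerunning that decomposition with exponent $M\le L$ on the fixed interval $(0,x_0(L))$ yields the uniform moment bound (on $(0,x_0)$ one still has $l(y)\ge\lfloor L/100\rfloor$, which is all that proof uses). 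So your argument can be completed, at the cost of redoing the Lemma \ref{x:siml8}/\ref{x:siml99} machinery; the paper's multiplicative bookkeeping buys exactly the avoidance of that extra work. One point where you are more careful than the paper: your Jacobian computation $dm(x)=\frac{dm(y)}{2+y}$ is the correct way to transport the integral, whereas the paper invokes invariance of $m$ under $\alpha$ as an \emph{equality} of integrals, which for a single branch of the Gauss map is literally only an inequality (the branch $x=1/(1+y)$ contributes the density $\frac{1}{(1+y)(2+y)}$, one summand of the telescoping identity $\sum_{k\ge1}\frac{1}{(k+y)(k+1+y)}=\frac{1}{1+y}$); since the inequality points the right way, both arguments survive, but yours is the cleaner statement.
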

\begin{proof}
We have
\begin{align*}
\mathcal{L}(x,n)&=l(x)-xl(\alpha_1(x))\pm\cdots+(-1)^{n}x\alpha_1(x)\cdots \alpha_{n-1}(x)l(\alpha_{n}(x))\\
&=l(x)-x(l(\alpha_{1}(x))\mp\cdots+(-1)^{n-1}\alpha_1(x)\cdots l(\alpha_{n-1}(\alpha_1(x))))\\
&=l(x)-x\mathcal{L}(\alpha_1(x),n-1).
\end{align*}
For $x\in((1+x_0)^{-1},1)$, we have
$$l(x)=O(\exp(-C_2L))$$
and
$$x=1+O(\exp(-C_2L)).$$
Therefore,
$$\mathcal{L}(x,n)=-\mathcal{L}(\alpha_1(x),n-1)+R(x),$$
where
$$|R(x)|\leq \exp\left(-\frac{C_2}{2}x\right)\max\left( \mathcal{L}(\alpha_1(x),n-1),1\right).$$
Since the map $\alpha$ preserves the measure $m$, we have
$$\int_{(1+x_0)^{-1}}^1\mathcal{L}(\alpha_1(x),n-1)^Ldm(x)=\int_0^{x_0}\mathcal{L}(x,n-1)^Ldm(x)$$
and
$$\int_{(1+x_0)^{-1}}^1l(\alpha_1(x))^Ldm(x)=\int_0^{x_0}l(x)^Ldm(x).$$
The result of Lemma \ref{x:siml10} now follows by applying Lemma \ref{x:siml99} with $n-1$ instead of $n$.
\end{proof}
\begin{definition}\label{x:def217}
Let $\nu\in\mathbb{N}$. We set 
$$b(L,\nu)=\frac{L}{5}\exp\left(\frac{\nu}{100} \right),\  c(L,\nu)=\frac{L}{1000}\exp\left(\frac{\nu}{100} \right)$$ and
$$U(L,\nu)=\left\{x\in\left(x_0,\frac{1}{2}\right)\::\:b(L,\nu-1)\leq |\mathcal{L}(x,n)|\leq b(L,\nu) \right\}.$$
For $h_1,h_2\in\mathbb{N}\::\: 1\leq h_1,h_2\leq n,\:\nu_1,\nu_2\in\mathbb{N}$, set
\begin{align*}
U_1(L,\nu_1,\nu_2,h_1,h_2)=&\bigg\{ x\in\left(x_0,\frac{1}{2}\right)\::\: 2^{-h_1/4}c(L,\nu_1-1)\leq T^{h_1}l(x)\leq 2^{-h_1/4}c(L,\nu_1),\\
& 2^{-h_2/4}c(L,\nu_2-1)\leq T^{h_2}l(x)\leq 2^{-h_2/4}c(L,\nu_2) \bigg\} .
\end{align*}
We set 
$$U_1(L,\nu)=\bigcup_{\substack{\nu_1,\nu_2\geq \nu\\ 1\leq h_1,h_2\leq n}}U_1(L,\nu_1,\nu_2,h_1,h_2),$$
$$U_0(L,\nu)=U(L,\nu)\setminus U_1(L,\nu).$$
\end{definition}
\begin{lemma}\label{x:lem2180}
Let $x\in U_0(L,\nu).$ Then  there is exactly one $h_0\in\mathbb{N}$, $1\leq h_0\leq n$,
such that
\[
l(\alpha_{h_0}(x))\geq 2^{h_0/4}\frac{L}{3} \exp\left(\frac{\nu-1}{100} \right).\tag{30}
\]
We have
\[
m(U_0(L,\nu))\leq \exp\left(-\frac{L}{3}\exp\left(\frac{\nu-1}{100} \right) \right)\tag{31}
\]
\end{lemma}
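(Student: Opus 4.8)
The plan is to work directly with the expansion
\[
\mathcal{L}(x,n)=\sum_{h=0}^n(-1)^h\,T^hl(x),\qquad T^hl(x)=\beta_{h-1}(x)\,l(\alpha_h(x))\ge 0,
\]
and to control $\mathcal{L}$ by the sizes of its nonnegative summands, using the product bounds of Lemma \ref{x:molisegr}. Throughout write $E=\exp\!\left(\frac{\nu-1}{100}\right)$, so that $b(L,\nu-1)=\frac{L}{5}E$ and $c(L,\nu-1)=\frac{L}{1000}E$. First I would record two elementary facts. Since $x>x_0$, the term $h=0$ is negligible: $l(x)\le\lfloor L/100\rfloor\le\frac{L}{100}$. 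And grouping the factor $\alpha_0(x)=x<\tfrac12$ with consecutive pairs $\alpha_m\alpha_{m+1}\le\tfrac12$ from Lemma \ref{x:molisegr} gives $\beta_{h-1}(x)\le 2^{-1-\lfloor(h-1)/2\rfloor}$ for every $h\ge 1$, with the sharper estimate $\beta_1(x)=\alpha_0\alpha_1<\tfrac13$ valid on $(0,\tfrac12)$ (the supremum $\tfrac13$ being approached only near $x=\tfrac13$).

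Call an index $h\ge 1$ \emph{dominant} if $T^hl(x)\ge 2^{-h/4}c(L,\nu-1)$. The two inequalities defining $U_1$ in Definition \ref{x:def217} say exactly that $h_1,h_2$ are dominant for some $\nu_1,\nu_2\ge\nu$; since the intervals $[2^{-h/4}c(L,\nu_i-1),2^{-h/4}c(L,\nu_i)]$ tile $[2^{-h/4}c(L,\nu-1),\infty)$ as $\nu_i$ runs over integers $\ge\nu$, membership of $x$ in $U_1(L,\nu)$ is equivalent to the existence of two dominant indices. Hence on $U_0(L,\nu)=U(L,\nu)\setminus U_1(L,\nu)$ there is \emph{at most one} dominant index. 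Conversely, if no index were dominant then $|\mathcal{L}(x,n)|\le l(x)+\sum_{h\ge 1}2^{-h/4}c(L,\nu-1)<\frac{L}{50}E$, which is strictly less than $b(L,\nu-1)$, contradicting $x\in U(L,\nu)$. Thus there is exactly one dominant index $h_0$, and subtracting the contribution of all other terms (at most $\frac{L}{50}E$) from $|\mathcal{L}(x,n)|\ge b(L,\nu-1)$ yields $T^{h_0}l(x)\ge\frac{9}{50}LE$.

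The bound (30) then follows by dividing by $\beta_{h_0-1}(x)$: we get $l(\alpha_{h_0}(x))=T^{h_0}l(x)/\beta_{h_0-1}(x)\ge 2^{1+\lfloor(h_0-1)/2\rfloor}\,\frac{9}{50}LE$, and for $h_0\ge 3$ this already exceeds $2^{h_0/4}\frac{L}{3}E$. The small-index cases $h_0=1,2$ are the tight ones and must be treated with the sharp estimates $\beta_1<\tfrac13$ (which disposes of $h_0=2$) and $\beta_0=x<\tfrac12$, together with careful accounting of the loss incurred when passing from $b(L,\nu-1)$ to $T^{h_0}l(x)$; the case $h_0=1$ with $x$ near $\tfrac12$ is the narrowest margin. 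Uniqueness likewise reduces to the at-most-one-dominant-index property, the remaining point being to match the two size thresholds of (30) and of dominance through the product bounds. I expect precisely this uniform matching of explicit constants across all $h_0\ge 1$ to be the main obstacle: the estimates of Lemma \ref{x:molisegr} must be used in sharp form, and the slack between $\frac{9}{50}$ and the small values of $2^{h_0/4}$ is slim.

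For the measure estimate (31) I would use what has just been proved, namely the inclusion
\[
U_0(L,\nu)\subseteq\bigcup_{h=1}^{n}\Big\{x\in X:\ l(\alpha_h(x))\ge 2^{h/4}\,\tfrac{L}{3}E\Big\}.
\]
Since $l(\alpha_h(x))\ge t$ is the same as $\alpha_h(x)\le e^{-t}$ and $\alpha_h$ is the $h$-fold iterate of $\alpha$, the invariance of $m$ from Lemma \ref{x:lem6}(i) gives $m(\{\alpha_h(x)\le e^{-t}\})=m((0,e^{-t}))\le 2e^{-t}$, using $m\le 2\mu$. Summing the resulting bounds $2\exp(-2^{h/4}\frac{L}{3}E)$ over $h\ge 1$, the factor $2^{h/4}>1$ forces super-geometric decay, so that for $L$ large the series is dominated by its first term; this gives $m(U_0(L,\nu))\le\exp(-\frac{L}{3}E)$, which is exactly (31).
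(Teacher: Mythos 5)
Your proposal follows the same route as the paper's own proof: the same dominant-index dichotomy (your ``dominant'' indices are exactly the paper's (33), with your constant $\tfrac{9}{50}$ in place of its $\tfrac{19}{100}$ in (34)), the same division by $\beta_{h_0-1}(x)$ controlled via Lemma \ref{x:molisegr} to reach (30), and the same invariance-plus-union-bound summation for (31); even your reading of Definition \ref{x:def217} with $h_1\neq h_2$ (without which $U_1(L,\nu)$ would swallow every point having a single dominant index) is clearly the intended one, as the parallel Definition \ref{x:def2220} with $h_1<h_2$ confirms. The one point you leave open --- closing the constant at $h_0=1$ --- is not a defect of your argument relative to the paper's: the paper has exactly the same shortfall. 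Writing $E=\exp\left(\frac{\nu-1}{100}\right)$, its (34) gives $T^{1}l(x)\geq\frac{19}{100}LE$, and $x<\frac12$ then yields only $l(\alpha_1(x))\geq\frac{38}{100}LE$, which falls short of the claimed $2^{1/4}\frac{L}{3}E\approx 0.3966\,LE$; the paper simply asserts the step. The discrepancy is harmless for the only use made of (30): in the union bound for (31), the $h_0=1$ term contributes $2\exp(-cLE)$ with $c=0.36$ (your version) or $c=0.38$ (the paper's), and since both exceed $\tfrac13$, summing over $h_0$ still gives $m(U_0(L,\nu))\leq\exp\left(-\frac{L}{3}E\right)$ for $L$ large. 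A similar remark applies to the ``exactly one'' assertion: like the paper, you establish uniqueness only of the dominant index, not of indices satisfying (30) (a non-dominant $h$ could have huge $l(\alpha_h(x))$ compensated by a tiny $\beta_{h-1}(x)$); this too is immaterial, since only existence of some $h_0$ satisfying (30) enters the derivation of (31). So your proof is, modulo these shared constant issues which a slight weakening of (30) repairs, a faithful reconstruction of the paper's argument.
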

\begin{proof}
By Definition \ref{x:def217}, we have
\[
\mathcal{L}(x,n)\geq b(L,\nu-1)=\frac{L}{5}\exp\left( \frac{\nu-1}{100}\right).\tag{32}
\]
Then, (32) implies that 
\[T^{h_0}l(x)\geq 2^{-h_0/4}\frac{L}{100}\exp\left(\frac{\nu-1}{100}\right),\tag{33}\]
for at least one $h_0$, $1\leq h_0\leq n$.\\
Since by Definition \ref{x:def217}, $x\not\in U_1(L,\nu_1,\nu_2,h_1,h_2)$, for any pair $(h_1,h_2)$ such that $1\leq h_1,h_2\leq n$, we have
\[
T^hl(x)<2^{-h/4}c(h,\nu-1),
\]
for all $h\neq h_0$.\\
From (32), it follows that
\[
T^{h_0}l(x)\geq \frac{19}{100}L\exp\left(\frac{\nu-1}{100}\right).\tag{34}
\]
We have
$$T^{h_0}l(x)=x\alpha_1(x)\cdots \alpha_{h_0-1}(x)l(\alpha_{h_0}(x)).$$
Because of the fact that $x< 1/2$, it follows from inequality (34) and Lemma \ref{x:molisegr} that
\[l(\alpha_{h_0}(x))\geq 2^{h_0/4}\:\frac{L}{3}\exp\left(\frac{\nu-1}{100}\right)\tag{35}\]
and thus
\[
\alpha_{h_0}(x)\leq \exp\left(-2^{h_0/4}\:\frac{L}{3}\exp\left(\frac{\nu-1}{100}\right)  \right). \tag{36}
\]
Inequality (31) follows from (36) by summing over $h_0$.
\end{proof}
\begin{lemma}
We have
\begin{align*}
\int_{U_0(L,\nu)}|\mathcal{L}(x,n)|^Ldx&\leq \Gamma(L+1)\exp\left(-\frac{L}{100}\exp\left(\frac{\nu-1}{100}\right)\right).\tag{37}
\end{align*}
\end{lemma}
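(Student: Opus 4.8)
The plan is to bound the integrand pointwise on $U_0(L,\nu)$ and pay for the entire integral with the measure estimate (31) of Lemma \ref{x:lem2180}. Since $U_0(L,\nu)\subseteq U(L,\nu)$, Definition \ref{x:def217} gives $|\mathcal{L}(x,n)|\leq b(L,\nu)=\frac{L}{5}\exp(\nu/100)$ everywhere on $U_0(L,\nu)$. Hence
$$\int_{U_0(L,\nu)}|\mathcal{L}(x,n)|^L\,dx\leq b(L,\nu)^L\,\mu\big(U_0(L,\nu)\big),$$
where $\mu$ is Lebesgue measure. I would then pass to the measure $m$: since $\frac{1}{1+x}\geq\frac{1}{2}$ on $(0,1)$, one has $\mu(\mathcal{E})\leq 2\log 2\cdot m(\mathcal{E})\leq 2\,m(\mathcal{E})$ for every measurable $\mathcal{E}\subseteq(0,1)$, so that (31) yields $\mu(U_0(L,\nu))\leq 2\exp(-\frac{L}{3}\exp(\frac{\nu-1}{100}))$.

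The quantitative heart of the argument is a Stirling-type comparison. I would use the elementary bound $\Gamma(L+1)=L!\geq (L/e)^L$ to write
$$b(L,\nu)^L=\Big(\frac{L}{5}\Big)^L\exp\Big(\frac{L\nu}{100}\Big)=\Big(\frac{L}{e}\Big)^L\Big(\frac{e}{5}\Big)^L\exp\Big(\frac{L\nu}{100}\Big)\leq \Gamma(L+1)\exp\Big(-(\log 5-1)L+\frac{L\nu}{100}\Big).$$
Combining the three displays, the claim (37) reduces to the scalar inequality
$$2\exp\Big(-(\log 5-1)L+\frac{L\nu}{100}-\frac{L}{3}\exp\big(\tfrac{\nu-1}{100}\big)\Big)\leq \exp\Big(-\frac{L}{100}\exp\big(\tfrac{\nu-1}{100}\big)\Big),$$
i.e., after taking logarithms and dividing by $L$, to
$$\frac{\nu}{100}-(\log 5-1)+\frac{\log 2}{L}\leq \Big(\frac{1}{3}-\frac{1}{100}\Big)\exp\Big(\frac{\nu-1}{100}\Big)=\frac{97}{300}\exp\Big(\frac{\nu-1}{100}\Big).$$

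It remains to verify this last inequality for all integers $\nu\geq 1$ and all $L\geq L_0$. Setting $g(\nu)=\frac{97}{300}\exp(\frac{\nu-1}{100})-\frac{\nu}{100}+(\log 5-1)$, a single differentiation shows $g$ is minimized at $\nu^\ast=1+100\log(300/97)\approx 114$, where $g(\nu^\ast)=1-\frac{\nu^\ast}{100}+(\log 5-1)>0$ (numerically $\approx 0.47$); since the residual term $\frac{\log 2}{L}$ tends to $0$, the inequality holds once $L\geq L_0$. I expect the main obstacle to be precisely this calibration of constants: the factor $\exp(L\nu/100)$ produced by $b(L,\nu)^L$ grows with $\nu$ and would overwhelm the target decay if one kept only the crude bound $(L/5)^L\leq\Gamma(L+1)$, so it is essential to extract the surplus $\exp(-(\log 5-1)L)$ from Stirling's estimate. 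The genuinely delicate range is the intermediate one near $\nu\approx 100$, where the linear-in-$\nu$ growth competes most strongly with the doubly exponentially small measure bound, and this is exactly where the minimum of $g$ must be checked.
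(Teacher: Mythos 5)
Your proposal is correct and follows exactly the paper's route: the pointwise bound $|\mathcal{L}(x,n)|\leq b(L,\nu)$ from Definition \ref{x:def217} multiplied by the measure estimate (31), followed by comparison with $\Gamma(L+1)$. The only difference is that you spell out the Stirling calibration $\frac{\nu}{100}-(\log 5-1)\leq\frac{97}{300}\exp\bigl(\frac{\nu-1}{100}\bigr)$ and the Lebesgue-versus-$m$ conversion, both of which the paper leaves implicit in its one-line chain of inequalities.
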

\begin{proof}
From Definition \ref{x:def217}, Lemma \ref{x:lem2180} and (31), we obtain:
\begin{align*}
\int_{U_0(L,\nu)}|\mathcal{L}(x,n)|^Ldx&\leq b(L,\nu)^L\exp\left(-\frac{L}{3}\exp\left(\frac{\nu-1}{100}\right)\right)\\
&\leq \Gamma(L+1)\exp\left(-\frac{L}{100}\exp\left(\frac{\nu-1}{100}\right)\right).
\end{align*}
\end{proof}
\begin{lemma}\label{x:lem220}
For an absolute constant $C_5>0$, we have
$$\int_{U_1(L,\nu)}|\mathcal{L}(x,n)|^Ldx\leq \Gamma(L+1)\exp(-C_5L)\exp\left(\frac{\nu-1}{100}\right).$$
\end{lemma}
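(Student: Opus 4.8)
The plan is to bound $|\mathcal{L}(x,n)|^L$ pointwise and to show that the measure of $U_1(L,\nu)$ is doubly exponentially small, the latter being forced by the fact that, by Definition \ref{x:def217}, membership in $U_1$ requires \emph{two} of the quantities $T^{h}l(x)$ to be large simultaneously. Since $U_1(L,\nu)$ occurs as the part of $U(L,\nu)$ complementary to $U_0(L,\nu)=U(L,\nu)\setminus U_1(L,\nu)$, every relevant $x$ lies in $U(L,\nu)$, so $|\mathcal{L}(x,n)|\le b(L,\nu)=\frac{L}{5}\exp(\nu/100)$. Passing from Lebesgue measure to $m$ via $dx\le 2\,dm$ on $(0,1)$, this reduces the claim to
\[
\int_{U_1(L,\nu)}|\mathcal{L}(x,n)|^L\,dx\le 2\,b(L,\nu)^L\,m\big(U_1(L,\nu)\big),
\]
so that everything rests on estimating $m(U_1(L,\nu))$.

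For the measure I would treat each constituent $U_1(L,\nu_1,\nu_2,h_1,h_2)$ separately, taking $h_1<h_2$ without loss of generality, this being the essential configuration of two distinct large terms that distinguishes $U_1(L,\nu)$ from the one-large-term set $U_0(L,\nu)$ of Lemma \ref{x:lem2180}. By Definition \ref{x:def217} such an $x$ satisfies $T^{h_1}l(x)\ge 2^{-h_1/4}c(L,\nu_1-1)=:u$ and $T^{h_2}l(x)\ge 2^{-h_2/4}c(L,\nu_2-1)=:v$, whence, in the notation of Definition \ref{x:simd6},
\[
U_1(L,\nu_1,\nu_2,h_1,h_2)\subseteq \mathcal{J}\big(h_1,\,h_2-h_1,\,u,\,v\big).
\]
Lemma \ref{x:siml7} then yields
\[
m\big(U_1(L,\nu_1,\nu_2,h_1,h_2)\big)\le 2\exp\Big(-2^{(h_2-h_1-2)/2}\,v\,\exp\big(2^{(h_1-2)/2}u\big)\Big),
\]
and inserting $c(L,\nu)=\frac{L}{1000}\exp(\nu/100)$ together with $\nu_1,\nu_2\ge\nu$ shows the inner factor is already at least $\exp\big(2^{-3/4}\frac{L}{1000}\exp(\frac{\nu-1}{100})\big)$, so each term is doubly exponentially small in both $L$ and $\nu$.

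It then remains to sum over $\nu_1,\nu_2\ge\nu$ and $1\le h_1<h_2\le n$. Because the exponent grows in each of $h_1,h_2,\nu_1,\nu_2$, these sums converge super-exponentially and are controlled by a constant multiple of their smallest term, attained at the minimal indices; hence $m(U_1(L,\nu))\le \exp\big(-c\,\frac{L}{1000}\exp(\frac{\nu-1}{100})\exp\big(c'\frac{L}{1000}\exp(\frac{\nu-1}{100})\big)\big)$ for absolute constants $c,c'>0$. Multiplying by $b(L,\nu)^L=(L/5)^L\exp(\nu L/100)$ and comparing with $\Gamma(L+1)\asymp (L/e)^L$, the outermost exponential swamps $(L/5)^L\exp(\nu L/100)$ once $L\ge L_0$, giving the asserted bound for any fixed $C_5$ with the harmless factor $\exp((\nu-1)/100)$ to spare. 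The main obstacle is precisely this comparison: one must verify that having \emph{two} distinct large terms is essential, since a single large term at the loose threshold $c(L,\nu)=\frac{L}{1000}\exp(\nu/100)$ would produce only single-exponential decay with too small a constant to defeat the growth $\exp(\nu L/100)$ of $b(L,\nu)^L$ in the intermediate range of $\nu$; the doubly exponential bound supplied by Lemma \ref{x:siml7} is exactly what makes the estimate succeed, and the remaining effort is the careful bookkeeping of the nested exponents.
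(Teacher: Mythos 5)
Your proposal is correct and follows essentially the same route as the paper: decompose $U_1(L,\nu)$ into the sets $U_1(L,\nu_1,\nu_2,h_1,h_2)$, bound each by $\mathcal{J}(h_1,h_2-h_1,u,v)$ and apply Lemma \ref{x:siml7} to get a doubly exponential measure bound, multiply by the pointwise bound $b(L,\nu)^L$, and sum over the quadruplets $(\nu_1,\nu_2,h_1,h_2)$ before comparing with $\Gamma(L+1)$. The only (harmless) difference is bookkeeping: you correctly use the lower endpoints $c(L,\nu_i-1)$ in the containment into $\mathcal{J}$, where the paper writes $c(L,\nu_i)$, and you spell out the final comparison with Stirling's formula which the paper leaves implicit.
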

\begin{proof}
Let $\nu_1,\nu_2\geq \nu$, $h_1,h_2\in\mathbb{N}$, $1\leq h_1<h_2\leq n$. We apply
Lemma \ref{x:siml7} with $d=h_1$, $d+h=h_2$, $u=2^{-h_1/4}c(L,\nu_1)$, $v=2^{-h_2/4}c(L,\nu_2)$ and obtain:
\begin{align*}
m(U_1(L,\nu_1,\nu_2,h_1,h_2))&\leq \exp(-2^{h_2/8}c(L,\nu_2)\exp(2^{h_1/8}c(L,\nu_1)))\\
&=: d(L,\nu_1,\nu_2,h_1,h_2),\ \ \text{say}.
\end{align*}
For $x\in U_1(L,\nu_1,\nu_2,h_1,h_2)$, by Definition \ref{x:def217} we have
$$|\mathcal{L}(x,n)|\leq b(L,\nu).$$
Thus
$$\int_{U_1(L,\nu_1,\nu_2,h_1,h_2)}|\mathcal{L}(x,n)|^Ldx\leq b(L,\nu)^Ld(l,\nu_1,\nu_2,h_1,h_2).$$
The claim of Lemma \ref{x:lem220} follows by summing over the quadruplets $(\nu_1,\nu_2,h_1,h_2)$ with
\mbox{$\nu_1,\nu_2\geq \nu$}, $h_1,h_2\in\mathbb{N}$.
\end{proof}
\begin{lemma}\label{x:221}
There is a constant $C_6>0$, such that
$$\int_{x_0}^{1/2}|\mathcal{L}(x,n)|^Ldx\leq \Gamma(L+1)\exp(-C_6L).$$
\end{lemma}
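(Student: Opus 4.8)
The plan is to estimate the integral by decomposing $(x_0,1/2)$ according to the size of $|\mathcal{L}(x,n)|$ and then to assemble the measure estimates already proved for the pieces $U_0(L,\nu)$ and $U_1(L,\nu)$. First I would isolate the region $A=\{x\in(x_0,1/2):|\mathcal{L}(x,n)|<b(L,0)\}$, where $b(L,0)=L/5$. On $A$ the integrand is at most $(L/5)^L$ and the length is at most $1/2$, so Stirling's formula gives $\int_A|\mathcal{L}(x,n)|^L\,dx\leq(L/5)^L\leq\Gamma(L+1)\exp(-(\log5-1)L)$, which is already of the desired shape since $\log5-1>0$. On the complement of $A$ one has $|\mathcal{L}(x,n)|\geq b(L,0)$, and since $b(L,\nu)\to\infty$, the sets $U(L,\nu)$ of Definition \ref{x:def217} cover this complement; distinct values of $\nu$ give sets that are disjoint up to measure zero, because they correspond to disjoint ranges $[b(L,\nu-1),b(L,\nu)]$ of $|\mathcal{L}(x,n)|$.

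For each $\nu\geq1$ I would split $U(L,\nu)=U_0(L,\nu)\sqcup(U(L,\nu)\cap U_1(L,\nu))$ and treat the two families separately. The $U_0$-contributions are immediate: summing the bound (37) over $\nu\geq1$ gives $\sum_{\nu\geq1}\Gamma(L+1)\exp(-\tfrac{L}{100}e^{(\nu-1)/100})$, whose terms decay doubly exponentially in $\nu$, so the sum is dominated by its $\nu=1$ term and is at most $2\Gamma(L+1)\exp(-L/100)$ for $L\geq L_0$.

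The main obstacle is the $U_1$-contributions, because Lemma \ref{x:lem220} carries the factor $\exp((\nu-1)/100)$, which grows with $\nu$, so summing that bound term by term over $\nu$ would diverge. The way around this is not to sum the bounds at all, but to exploit two structural facts: the sets $U(L,\nu)$ are pairwise disjoint, and the defining union of $U_1(L,\nu)$ shrinks as $\nu$ increases, so $U_1(L,\nu)\subseteq U_1(L,1)$ for every $\nu\geq1$. Consequently the sets $U(L,\nu)\cap U_1(L,\nu)$ are pairwise disjoint subsets of $U_1(L,1)$, and additivity of the integral yields $\sum_{\nu\geq1}\int_{U(L,\nu)\cap U_1(L,\nu)}|\mathcal{L}(x,n)|^L\,dx\leq\int_{U_1(L,1)}|\mathcal{L}(x,n)|^L\,dx\leq\Gamma(L+1)\exp(-C_5L)$, the last step being Lemma \ref{x:lem220} applied with $\nu=1$.

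Finally I would add the three contributions, from $A$, from the $U_0$-parts, and from the $U_1$-parts. Each is of the form $\Gamma(L+1)\exp(-cL)$ with $c\in\{\log5-1,\,1/100,\,C_5\}$ up to bounded factors, so taking $C_6$ to be any positive constant strictly below $\min\{\log5-1,\,1/100,\,C_5\}$ absorbs the constant multiples for $L\geq L_0$ and gives the stated bound. The only genuinely delicate point is the disjointness-plus-nesting argument for the $U_1$-parts; everything else reduces to Stirling's estimate and the summation of a rapidly convergent series.
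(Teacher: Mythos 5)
Your proposal is correct, and its overall skeleton matches the paper's: the paper also splits $(x_0,1/2)\cap X$ into $X_2=\{x:|\mathcal{L}(x,n)|\le L/5\}$ (your set $A$), handled by Stirling exactly as you do, and $X_1=\bigcup_{\nu\ge 1}U(L,\nu)$, handled by invoking the $U_0$- and $U_1$-lemmas ``by summing over $\nu$.'' Where you genuinely diverge from the paper is the treatment of the $U_1$-contributions, and your version is in fact the more careful one. The paper sums the bound of Lemma \ref{x:lem220} term by term over $\nu$; but as that bound is literally stated, namely $\Gamma(L+1)\exp(-C_5L)\exp\bigl(\tfrac{\nu-1}{100}\bigr)$, it \emph{grows} with $\nu$, so a term-by-term summation diverges. (The divergence is only apparent: the proof of Lemma \ref{x:lem220} actually establishes doubly exponential decay in $\nu$ through the measure bound $d(L,\nu_1,\nu_2,h_1,h_2)$, so the stated factor $\exp\bigl(\tfrac{\nu-1}{100}\bigr)$ is presumably a weakened or misstated form; the paper's summation implicitly relies on this stronger, unstated decay.) Your disjointness-plus-nesting argument --- the sets $U(L,\nu)\cap U_1(L,\nu)$ are pairwise disjoint and all contained in $U_1(L,1)$, since the union defining $U_1(L,\nu)$ is over $\nu_1,\nu_2\ge\nu$ and therefore shrinks as $\nu$ grows --- sidesteps this entirely with a single application of Lemma \ref{x:lem220} at $\nu=1$, taking the lemma only at face value. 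Your handling of the $U_0$-parts (the series $\sum_\nu\exp\bigl(-\tfrac{L}{100}e^{(\nu-1)/100}\bigr)$ being dominated by its first term) is the same as what the paper intends and is sound. In short: same decomposition, but your argument closes a gap that the paper's ``summing over $\nu$'' leaves open if one uses only the stated form of Lemma \ref{x:lem220}.
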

\begin{proof}
We have $(x_0,1/2)\cap X=X_1\cup X_2$, where $$X_1=\bigcup_{\nu\in\mathbb{N}} U(L,\nu),\ \ X_2=X\setminus X_1.$$
From Lemmas \ref{x:lem2180} and \ref{x:lem220}, by summing over $\nu$, we obtain
\[
\int_{X_1}|\mathcal{L}(x,n)|^Ldm(x)=O\left(\Gamma(L+1)\exp(-C_7L)\right),\tag{38}
\]
with $C_7>0$.\\
For $x\in X_2$, by Definition \ref{x:def217}, we have
$$|\mathcal{L}(x,n)|\leq \frac{L}{5}.$$
Therefore, since by Stirling's formula $$\Gamma(L+1)\geq L^L\eta^{-L},$$ for all $\eta>e$ if $L$ is sufficiently large, we have
\[
\int_{X_2}|\mathcal{L}(x,n)|^Ldm(x)\leq L^L5^{-L}=O(\Gamma(L+1)\exp(-C_6L)).\tag{39}
\]
The result follows from (38) and (39).
\end{proof}
\begin{definition}\label{x:def2220}
For $\nu\in\mathbb{N}$, let
$$\mathcal{V}(L,\nu)=\left\{ x\in\left(\frac{1}{2},(1+x_0)^{-1}\right)\::\: b(L,\nu-1)\leq |\mathcal{L}(x,n)|\leq b(L,\nu)\right\}.$$
For $h_1,h_2\in\mathbb{N}$ with $1\leq h_1,h_2\leq n$, $\nu_1,\nu_2\in\mathbb{N}$, let
\begin{align*}
\mathcal{V}_1(L,\nu_1,\nu_2,h_1,h_2)&=\bigg\{x\in\left(\frac{1}{2},(1+x_0)^{-1}\right)\::\: 2^{-h_1/4}c(L,\nu_1-1)\leq T^{h_1}l(x)\leq2^{-h_1/4}c(L,\nu_1),\\
&2^{-h_2/4}c(L,\nu_2-1)\leq T^{h_2}l(x)\leq 2^{-h_2/4}c(L,\nu_2)\bigg\}.
\end{align*}
We set
$$\mathcal{V}_1(L,\nu)=\bigcup_{\substack{\nu_1,\nu_2\geq \nu \\ 1\leq h_1<h_2\leq n}}\mathcal{V}_1(L,\nu_1,\nu_2,h_1,h_2) $$
and
$$\mathcal{V}_0(L,\nu)=\mathcal{V}(L,\nu)\setminus \mathcal{V}_1(L,\nu).$$
\end{definition}
\begin{lemma}\label{x:lem2230}
Let $x\in \mathcal{V}_0(L,\nu)$. Then there is exactly one $h_0\in\mathbb{N}$, $2\leq h_0\leq  n$,
such that 
$$l(\alpha_{h_0}(x))\geq 2^{h_0/4}\:\frac{L}{3}\exp\left(\frac{\nu-1}{100} \right).$$
\end{lemma}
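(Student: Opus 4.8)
The plan is to transcribe the proof of Lemma~\ref{x:lem2180} almost verbatim; the one genuinely new feature is the sharper index range $h_0\geq 2$, which will be forced by the upper cutoff $x<(1+x_0)^{-1}$ built into Definition~\ref{x:def2220}. Starting from $x\in\mathcal{V}_0(L,\nu)\subseteq\mathcal{V}(L,\nu)$, Definition~\ref{x:def2220} gives $|\mathcal{L}(x,n)|\geq b(L,\nu-1)=\frac{L}{5}\exp(\frac{\nu-1}{100})$. Expanding $\mathcal{L}(x,n)=\sum_{h=0}^n(-1)^hT^hl(x)$ and using $\sum_{h\geq 0}2^{-h/4}<7$, exactly as in the derivation of (33), I obtain at least one index with $T^hl(x)\geq 2^{-h/4}\frac{L}{100}\exp(\frac{\nu-1}{100})$. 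For uniqueness I argue as before: if two distinct indices $h_1<h_2$ both exceeded the threshold $2^{-h_i/4}c(L,\nu-1)$, then choosing levels $\nu_1,\nu_2\geq\nu$ with $2^{-h_i/4}c(L,\nu_i-1)\leq T^{h_i}l(x)\leq 2^{-h_i/4}c(L,\nu_i)$ would place $x\in\mathcal{V}_1(L,\nu_1,\nu_2,h_1,h_2)\subseteq\mathcal{V}_1(L,\nu)$, contradicting $x\in\mathcal{V}_0(L,\nu)=\mathcal{V}(L,\nu)\setminus\mathcal{V}_1(L,\nu)$; this is precisely why Definition~\ref{x:def2220} builds $\mathcal{V}_1$ from ordered pairs $h_1<h_2$. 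Hence a single index $h_0$ survives, all other terms obey $T^hl(x)<2^{-h/4}c(L,\nu-1)$, and subtracting them from $|\mathcal{L}(x,n)|$ gives the refined bound $T^{h_0}l(x)\geq\frac{19}{100}L\exp(\frac{\nu-1}{100})$, as in (34).

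The heart of the matter is to show $h_0\geq 2$. The case $h_0=0$ is impossible since $T^0l(x)=\log(1/x)<\log 2$ for $x>\frac12$. To exclude $h_0=1$ I invoke the upper cutoff: because $x\in(\frac12,(1+x_0)^{-1})$ we have $1/x\in(1+x_0,2)$, so $a_1(x)=1$ and $\alpha_1(x)=1/x-1\geq x_0=\exp(-\lfloor L/100\rfloor)$ by Definition~\ref{x:212121}. Consequently $T^1l(x)=x\log(1/\alpha_1(x))\leq\lfloor L/100\rfloor\leq L/100$, which is incompatible with the refined bound $T^{h_0}l(x)\geq\frac{19}{100}L\exp(\frac{\nu-1}{100})$ if $h_0=1$. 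In Lemma~\ref{x:lem2180} no such exclusion was needed, because there $\alpha_0=x<\frac12$ already supplies amplification at $h_0=1$; here $\alpha_0=x>\frac12$ does not, so the value $h_0=1$ has to be discarded on grounds of smallness.

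Finally, writing $T^{h_0}l(x)=\alpha_0(x)\cdots\alpha_{h_0-1}(x)\,l(\alpha_{h_0}(x))$ with $h_0\geq 2$ and bounding the product by Lemma~\ref{x:molisegr} --- grouping the factors into consecutive pairs, the first pair $\alpha_0\alpha_1\leq\frac12$ being admissible for $x>\frac12$ by (9) --- yields $\alpha_0(x)\cdots\alpha_{h_0-1}(x)\leq 2^{-\lfloor h_0/2\rfloor}$, and combining this with the refined bound produces $l(\alpha_{h_0}(x))\geq 2^{h_0/4}\frac{L}{3}\exp(\frac{\nu-1}{100})$, as claimed. I expect the main obstacle to be exactly the exclusion of $h_0=1$ carried out in the second paragraph; the remainder of the argument transcribes directly from the proof of Lemma~\ref{x:lem2180}.
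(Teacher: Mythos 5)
Your proof is correct and follows essentially the same route as the paper's: both arguments rest on the same ingredients, namely the lower bound $|\mathcal{L}(x,n)|\ge b(L,\nu-1)$, the cutoff $x<(1+x_0)^{-1}$ forcing $a_1(x)=1$ and hence $T^1l(x)\le\lfloor L/100\rfloor$ (together with $l(x)<\log 2$ for the $h=0$ term), non-membership in $\mathcal{V}_1(L,\nu)$ for uniqueness of the large index, and the bound $x\alpha_1(x)\le\tfrac12$ from Lemma \ref{x:molisegr} for the final amplification once $h_0\ge 2$. The only difference is organizational --- the paper rules out $h_0\in\{0,1\}$ already when extracting the large index from (40), whereas you rule them out afterwards by comparison with the refined bound $\tfrac{19}{100}L\exp\left(\tfrac{\nu-1}{100}\right)$ --- and your numerical constants carry the same harmless slack as the paper's own (42).
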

\begin{proof}
By Definition \ref{x:def2220}, we have
\[
\mathcal{L}(x,n)\geq b(L,\nu-1)=\frac{L}{5}\exp\left(\frac{\nu-1}{100} \right).\tag{40}
\]
Because of $$l(x)<\log2,\ l(\alpha_1(x))<l(\alpha_1(1+x_0)^{-1})=\left\lfloor\frac{L}{100}\right\rfloor$$ and
$$\mathcal{L}(x,n)=l(x)-T^1l(x)\pm\cdots+(-1)^{n}T^{n}l(x),$$
(40) implies that
$$T^{h_0}l(x)\geq 2^{-h_0/4}\frac{L}{1000}\exp\left(\frac{\nu-1}{100} \right)$$
for at least one $h_0$, $2\leq h_0 \leq n$.\\
Since by Definition \ref{x:def2220}, $x\not\in \mathcal{V}_1(L,\nu_1,\nu_2,h_1,h_2)$ we have
\[
T^hl(x)<2^{-h/4}c(h,\nu-1),\tag{41}
\]
for all $h\neq h_0$. From (40) and (41) it follows that
\[
T^{h_0}l(x)\geq 2^{h_0/4}\:\frac{19}{100}\exp\left(\frac{\nu-1}{100} \right).\tag{42}
\]
We have
$$T^{h_0}l(x)=x\alpha_1(x)\cdots\alpha_{h_0-1}(x)l(\alpha_{h_0}(x)).$$
Because of the fact that $h_0\geq 2$ and $x\alpha_1(x)\leq 1/2$, inequality (42) implies that
$$l(\alpha_{h_0}(x))\geq 2^{h_0/4}\:\frac{L}{3}\exp\left(\frac{\nu-1}{100} \right)$$
and thus
\[
\alpha_{h_0}(x)\leq \exp\left( -2^{-h_0/4}\:\frac{L}{3}\exp\left(\frac{\nu-1}{100} \right) \right).\tag{43}
\]
Lemma \ref{x:lem2230} follows from (43) by summing over $h_0$.
\end{proof}
From Lemma \ref{x:lem2230} we now deduce in a manner completely analogous to the deduction of Lemma \ref{x:221} the following:
\begin{lemma}\label{x:lem240}
$$\int_{1/2}^{(1+x_0)^{-1}}|\mathcal{L}(x,n)|^Ldx\leq \Gamma(L+1)\exp(-C_8L),$$
for a constant $C_8>0$.
\end{lemma}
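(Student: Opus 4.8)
The plan is to transcribe the argument that yielded Lemma \ref{x:221} to the interval $(1/2,(1+x_0)^{-1})$, using the objects $\mathcal{V},\mathcal{V}_0,\mathcal{V}_1$ of Definition \ref{x:def2220} in place of $U,U_0,U_1$ and the structural result of Lemma \ref{x:lem2230} in place of Lemma \ref{x:lem2180}. First I would split $(1/2,(1+x_0)^{-1})\cap X$ into the ``large'' part $Y_1=\bigcup_{\nu\geq 1}\mathcal{V}(L,\nu)$, on which $|\mathcal{L}(x,n)|\geq b(L,0)=L/5$, and the complementary ``small'' part $Y_2$, on which $|\mathcal{L}(x,n)|<L/5$. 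On $Y_2$ the bound is immediate: $\int_{Y_2}|\mathcal{L}(x,n)|^L\,dx\leq (L/5)^L=L^L5^{-L}$, and since $5>e$, Stirling's inequality $\Gamma(L+1)\geq L^L\eta^{-L}$ (valid for any fixed $\eta>e$ and $L$ large) gives $L^L5^{-L}=O(\Gamma(L+1)\exp(-C_8L))$, exactly as in (39). Throughout I use that $m$ and Lebesgue measure are comparable on $(0,1)$, so the distinction between $dx$ and $dm(x)$ only affects constants.

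The bulk of the work is the bound on $Y_1$, obtained by splitting each shell $\mathcal{V}(L,\nu)=\mathcal{V}_0(L,\nu)\cup\mathcal{V}_1(L,\nu)$ and summing over $\nu$. For $\mathcal{V}_0(L,\nu)$ I would first extract from the proof of Lemma \ref{x:lem2230} the measure estimate: inequality (43) bounds $\alpha_{h_0}(x)$ for the unique admissible $h_0$, and summing over $h_0$ yields $m(\mathcal{V}_0(L,\nu))\leq\exp(-\tfrac{L}{3}\exp(\tfrac{\nu-1}{100}))$, the analog of (31). Combined with the crude pointwise bound $|\mathcal{L}(x,n)|\leq b(L,\nu)$ on $\mathcal{V}(L,\nu)$, this gives $\int_{\mathcal{V}_0(L,\nu)}|\mathcal{L}(x,n)|^L\,dx\leq b(L,\nu)^L m(\mathcal{V}_0(L,\nu))\leq\Gamma(L+1)\exp(-\tfrac{L}{100}\exp(\tfrac{\nu-1}{100}))$, the analog of (37). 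For $\mathcal{V}_1(L,\nu)$ I would apply Lemma \ref{x:siml7} to each piece $\mathcal{V}_1(L,\nu_1,\nu_2,h_1,h_2)$ with $d=h_1$, $d+h=h_2$, $u=2^{-h_1/4}c(L,\nu_1)$, $v=2^{-h_2/4}c(L,\nu_2)$, bound $|\mathcal{L}|$ by $b(L,\nu)$, and sum over the quadruplets, exactly as in Lemma \ref{x:lem220}, obtaining $\int_{\mathcal{V}_1(L,\nu)}|\mathcal{L}(x,n)|^L\,dx\leq\Gamma(L+1)\exp(-C_5L)\exp(\tfrac{\nu-1}{100})$.

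Finally I would sum the two shell bounds over $\nu\geq 1$. The $\mathcal{V}_0$ contributions are dominated by the super-exponentially decaying factor $\exp(-\tfrac{L}{100}\exp(\tfrac{\nu-1}{100}))$ and are harmless; the $\mathcal{V}_1$ contributions carry the growing factor $\exp(\tfrac{\nu-1}{100})$, but the shells are empty once $b(L,\nu-1)$ exceeds the maximal size of $|\mathcal{L}(x,n)|$ on this interval --- which, since $l$ is bounded by $\lfloor L/100\rfloor$ here (equivalently, by the uniform $L^p$ bound (4)), restricts $\nu$ to a range of length $O(L)$. Hence $\sum_\nu\exp(\tfrac{\nu-1}{100})$ is at most $\exp(O(L))$ and is absorbed into a slightly smaller exponential constant. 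Together with the $Y_2$ bound this produces a single estimate $\Gamma(L+1)\exp(-C_8L)$ with $C_8>0$.

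The step I expect to require the most care is this last one: verifying that the summation over $\nu$ of the $\mathcal{V}_1$ bounds, which individually grow like $\exp(\tfrac{\nu-1}{100})$, is controlled by the geometric-in-$L$ gain $\exp(-C_5L)$. The crux is pinning down the cutoff value of $\nu$ beyond which $\mathcal{V}(L,\nu)$ is empty and checking that the accumulated factor over the admissible range stays below $\exp(C_5L/2)$, so that a net decay $\exp(-C_8L)$ survives. Everything else is a direct transcription of the $(x_0,1/2)$ argument, with the only genuine bookkeeping change being that the distinguished index here satisfies $h_0\geq 2$ rather than $h_0\geq 1$, a consequence of $l(x)<\log 2$ on $(1/2,1)$, which is precisely the content of Lemma \ref{x:lem2230}.
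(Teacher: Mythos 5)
Your overall plan---splitting $(1/2,(1+x_0)^{-1})\cap X$ into the shells $\mathcal{V}(L,\nu)=\mathcal{V}_0(L,\nu)\cup\mathcal{V}_1(L,\nu)$ plus the complementary set where $|\mathcal{L}(x,n)|< L/5$, treating $\mathcal{V}_0$ via the measure bound extracted from (43), treating $\mathcal{V}_1$ via Lemma \ref{x:siml7} exactly as in Lemma \ref{x:lem220}, and finishing the small set with Stirling---is precisely what the paper intends when it says the lemma follows ``in a manner completely analogous to the deduction of Lemma \ref{x:221}.'' However, the step you yourself flag as the crux contains a genuine error. You truncate the $\nu$-sum by claiming the shells $\mathcal{V}(L,\nu)$ are empty once $b(L,\nu-1)$ exceeds the maximal size of $|\mathcal{L}(x,n)|$ on the interval, justified by ``$l$ is bounded by $\lfloor L/100\rfloor$ here.'' That boundedness is false: on $(1/2,(1+x_0)^{-1})$ one has $l(x)<\log 2$ and $l(\alpha_1(x))\leq \lfloor L/100\rfloor$, but already $T^2l(x)=x\alpha_1(x)\,l(\alpha_2(x))=(1-x)\,l(\alpha_2(x))$ is unbounded, since $\alpha_2(x)$ comes arbitrarily close to $0$; hence $|\mathcal{L}(x,n)|$ is unbounded, shells of arbitrarily large index $\nu$ are nonempty, and no truncation at $\nu=O(L)$ exists. (The parenthetical appeal to the uniform $L^p$ bound (4) cannot rescue this: an $L^p$ bound never yields a pointwise bound.) With the bound of Lemma \ref{x:lem220} read literally---a factor $\exp(-C_5L)\exp\left(\frac{\nu-1}{100}\right)$ that \emph{grows} in $\nu$---your sum over all $\nu\geq 1$ then diverges, and the argument collapses.

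The correct way to close this (needed equally for the paper's own Lemma \ref{x:221}, whose ``summing over $\nu$'' is just as terse) is not to truncate the range of $\nu$ but to use the measure estimates underlying the shell bounds, which decay doubly exponentially in $\nu$: $m(\mathcal{V}_0(L,\nu))\leq \exp\left(-\frac{L}{3}\exp\left(\frac{\nu-1}{100}\right)\right)$, and from Lemma \ref{x:siml7}, $m(\mathcal{V}_1(L,\nu_1,\nu_2,h_1,h_2))\leq \exp\left(-2^{h_2/8}c(L,\nu_2)\exp\left(2^{h_1/8}c(L,\nu_1)\right)\right)$ with $\nu_1,\nu_2\geq\nu$. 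These crush the polynomial-in-shell pointwise factor $b(L,\nu)^L=(L/5)^L\exp\left(\frac{\nu L}{100}\right)$, so each shell's contribution decays superexponentially in $\nu$, the series over $\nu$ converges and is dominated by its first term, and one obtains $\Gamma(L+1)\exp(-C_8L)$. In short: summability comes from the doubly-exponential smallness of the shells' measures, not from emptiness of high shells. With that substitution, the rest of your transcription (the distinguished index satisfying $h_0\geq 2$ because $l(x)<\log 2$, the Stirling bound on the small set, and the comparability of $m$ with Lebesgue measure) is correct and matches the paper.
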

\begin{lemma}\label{x:2250}
Let $L=2k$, $k\in\mathbb{N}$. There is a constant $C_9>0$, such that
$$\int_0^1\mathcal{L}(x,n)^{2k}dx=2\Gamma(2k+1)\left(1+O\left( \exp(-C_9L)\right) \right).$$
\end{lemma}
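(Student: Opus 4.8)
The plan is to exploit the fact that the preceding lemmas localize the mass of $\mathcal{L}(x,n)^{2k}$ near the two endpoints of $(0,1)$, and to extract exactly one copy of $\Gamma(2k+1)$ from each. Accordingly, I would split along the cut points already singled out by those lemmas,
$$\int_0^1\mathcal{L}(x,n)^{2k}\,dx=\int_0^{x_0}\mathcal{L}(x,n)^{2k}\,dx+\int_{x_0}^{1/2}+\int_{1/2}^{(1+x_0)^{-1}}+\int_{(1+x_0)^{-1}}^1.$$
Throughout I would use that, since $\tfrac{dm}{dx}=\tfrac{1}{\log 2\,(1+x)}\in[\tfrac{1}{2\log 2},\tfrac{1}{\log 2}]$ on $(0,1)$, the measures $m$ and Lebesgue measure are comparable, so $\int_E f\,dx\leq 2\log 2\int_E f\,dm$ for $f\geq 0$; this feeds the $dm$-bounds of Lemmas \ref{x:siml99} and \ref{x:siml10} into a Lebesgue estimate at the cost of an absolute constant absorbed into the error term. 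The evenness $L=2k$ will be used twice: to write $|\mathcal{L}|^{L}=\mathcal{L}^{L}$ and to set $(-1)^L=1$.

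The two central ranges are immediately negligible: Lemma \ref{x:221} gives $\int_{x_0}^{1/2}\mathcal{L}(x,n)^{2k}\,dx\leq\Gamma(2k+1)\exp(-C_6L)$ and Lemma \ref{x:lem240} gives $\int_{1/2}^{(1+x_0)^{-1}}\mathcal{L}(x,n)^{2k}\,dx\leq\Gamma(2k+1)\exp(-C_8L)$. For the left range I would replace $\mathcal{L}^{L}$ by $l^{L}$ using Lemma \ref{x:siml99}(iii) (after the measure conversion above) and evaluate the main term by Lemma \ref{x:siml99}(i),(ii):
$$\int_0^{x_0}l(x)^{2k}\,dx=\Gamma(2k+1)-\int_{x_0}^1 l(x)^{2k}\,dx=\Gamma(2k+1)\bigl(1+O(\exp(-L/100))\bigr).$$
Hence $\int_0^{x_0}\mathcal{L}(x,n)^{2k}\,dx=\Gamma(2k+1)\bigl(1+O(\exp(-cL))\bigr)$ for a suitable $c>0$.

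For the right range, Lemma \ref{x:siml10} together with $(-1)^{2k}=1$ lets me replace $\mathcal{L}(x,n)^{2k}$ by $l(\alpha_1(x))^{2k}$ up to an error $O(\Gamma(2k+1)\exp(-C_4L))$. On $\bigl((1+x_0)^{-1},1\bigr)$ one has $\alpha_1(x)=\tfrac1x-1$, so the substitution $t=\alpha_1(x)$, $x=(1+t)^{-1}$, $dx=-(1+t)^{-2}\,dt$ carries this interval onto $(0,x_0)$ and gives
$$\int_{(1+x_0)^{-1}}^1 l(\alpha_1(x))^{2k}\,dx=\int_0^{x_0}\frac{l(t)^{2k}}{(1+t)^2}\,dt=\bigl(1+O(x_0)\bigr)\int_0^{x_0}l(t)^{2k}\,dt,$$
which by the left-range computation equals $\Gamma(2k+1)\bigl(1+O(\exp(-cL))\bigr)$, since $x_0=\exp(-\lfloor L/100\rfloor)$. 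Adding the four pieces and taking $C_9$ to be the smallest of the constants produced yields $\int_0^1\mathcal{L}(x,n)^{2k}\,dx=2\Gamma(2k+1)\bigl(1+O(\exp(-C_9L))\bigr)$.

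The substance of the argument is carried by the earlier large-deviation lemmas; the only genuine point of care is the near-$1$ endpoint. There one must notice that the large values of $\mathcal{L}$ come not from $l(x)$ (which is tiny) but from $l(\alpha_1(x))$, invoke the evenness of $L$ to kill the sign in $\mathcal{L}(x,n)=-\mathcal{L}(\alpha_1(x),n-1)+R(x)$, and verify through the explicit Jacobian $(1+t)^{-2}=1+O(x_0)$ that this endpoint delivers a full, clean $\Gamma(2k+1)$. I expect this to be the main (if modest) obstacle: computing the main term in Lebesgue measure rather than reducing it by a naive single-branch appeal to the $m$-invariance of the Gauss map is what makes the two endpoints contribute on an equal footing and produces the constant $2$ in the statement.
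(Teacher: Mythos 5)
Your proof is correct, and its skeleton is necessarily the paper's own: the paper's entire proof of this lemma is a one-sentence appeal to Lemmas \ref{x:siml99}, \ref{x:siml10}, \ref{x:221} and (what is plainly intended, despite the citation of Lemma \ref{x:lem2230}) Lemma \ref{x:lem240}, which forces exactly your four-interval decomposition. What you supply are the two details the paper suppresses, and one of them is genuinely substantive. The harmless one is the $dm$-versus-$dx$ conversion for the error bounds of Lemmas \ref{x:siml99}(iii) and \ref{x:siml10}; your comparability constant $2\log 2$ disposes of it. The substantive one is the evaluation of the main term at the right endpoint. The paper's own proof of Lemma \ref{x:siml10} passes from $((1+x_0)^{-1},1)$ to $(0,x_0)$ by asserting
$$\int_{(1+x_0)^{-1}}^1 l(\alpha_1(x))^L\,dm(x)=\int_0^{x_0}l(x)^L\,dm(x),$$
``since $\alpha$ preserves $m$''. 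That identity is false as stated: invariance of the Gauss measure concerns the full preimage $\alpha^{-1}(\mathcal{E})$, summed over all branches $t\mapsto 1/(t+n)$, not the single branch $t=\alpha_1(x)=1/x-1$ on $(1/2,1)$; the single-branch pullback of $dm(x)$ is $\frac{dt}{\log 2\,(1+t)(2+t)}$, which is smaller than $dm(t)=\frac{dt}{\log 2\,(1+t)}$ by a factor close to $2$. Inside Lemma \ref{x:siml10} this sloppiness is immaterial, because only an upper bound up to constants is needed there; but an evaluation of the main term along those lines would corrupt the constant in the present lemma. Your explicit Lebesgue substitution $t=\alpha_1(x)$ with Jacobian $(1+t)^{-2}=1+O(x_0)$ is the correct way to extract a clean $\Gamma(2k+1)$ from the endpoint near $1$, and it is precisely this computation, not $m$-invariance, that makes the two endpoints contribute equally and produces the factor $2$ in the statement. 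So your route is the paper's route, executed with more care at the one spot where care is actually required.
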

\begin{proof}
This follows from Lemmas \ref{x:siml99}, \ref{x:siml10}, \ref{x:221} and \ref{x:lem2230}.
\end{proof}
%
%
%
%
%
%
%
\textit{Proof of Theorem \ref{x:maint}.}\\
From Lemmas \ref{x:kl1} and \ref{x:2250}, it follows that
\[
\int_0^1\mathcal{W}(x)^{2k}dx=2\Gamma(2k+1)(1+O(\exp(-C_7L)).\tag{44}
\]
By Lemma \ref{x:lem4} we have
\[
g(x)=\mathcal{W}(x)+H(x)\tag{45}
\]
with $|H(x)|\leq C^*_0$ for a constant $C^*_0>0$. \\
We set
$$\mathcal{W}_*(x) =\max(|\mathcal{W}(x)|,1)$$
and obtain from (44) and (45):
$$\int_0^1{\mathcal{W}_*}(x)^{2k} dx=2\Gamma(2k+1)(1+O(\exp(-C_7L))),$$
$$g(x)={\mathcal{W}_*(x)}+{H_*(x)},$$
with $|H_*(x)|\leq C_1^*.$ We use the fact that 
$$\|f \|_{2k}=\left( \int_0^1f(x)^{2k}dx\right)^{1/2k}  $$
is a norm and obtain with a suitable constant $C_2^*>0$ that
\begin{align*}
\| g\|_{2k}&\leq \|{\mathcal{W}_*(x)} \|_{2k}+\|{H_*(x)} \|_{2k}\\
&\leq 2^{1/2k}\Gamma(2k+1)^{1/2k}\left(1+\frac{C_2^*}{2^{1/2k}\Gamma(2k+1)^{1/2k}} \right).
\end{align*}
Therefore 
$$\int_0^1g(x)^{2k}dx\leq 2\Gamma(2k+1)\left(1+\frac{C_2^*}{2^{1/2k}\Gamma(2k+1)^{1/2k}} \right)^{2k}
\leq \Gamma(2k+1)e^{3C_2^*}.$$
Analogously
$$\int_0^1g(x)^{2k}dx\geq \Gamma(2k+1)e^{-2C_2^*},$$
which concludes the proof of Theorem \ref{x:maint} for 
$$c_1=e^{-2C_2^*}\ \ \text{and}\ \ c_2=e^{3C_2^*}.$$
\qed
\newline
\textit{Proof of Corollary \ref{x:122}.} This follows from the fact that 
$$\Gamma(2k+1)=(2k)!\:.$$
\qed
\newline
\newline
\newline
\newline
\noindent\textbf{Acknowledgments.} The second author (M. Th. Rassias) expresses his gratitude to Professor E. Kowalski, who proposed to him this inspiring area of research, for providing constructive guidance and for granting him support for Postdoctoral research.
\vspace{5mm}
%
%
%
%
%

%
%
%
%
%
%
%
%
%
\vspace{10mm}

\end{document}